\definecolor{orcid_color}{HTML}{A6CE39}
\DeclareRobustCommand{\orcidicon}{%
	\raisebox{.3mm}{\scalerel*{%
	\begin{tikzpicture}[xscale=1,yscale=-1,transform shape]
	\filldraw[color=orcid_color] svg {M256,128c0,70.7-57.3,128-128,128C57.3,256,0,198.7,0,128C0,57.3,57.3,0,128,0C198.7,0,256,57.3,256,128z};
	\filldraw[color=white] svg {M86.3,186.2H70.9V79.1h15.4v48.4V186.2z} svg {M108.9,79.1h41.6c39.6,0,57,28.3,57,53.6c0,27.5-21.5,53.6-56.8,53.6h-41.8V79.1z M124.3,172.4h24.5
		c34.9,0,42.9-26.5,42.9-39.7c0-21.5-13.7-39.7-43.7-39.7h-23.7V172.4z} svg {M88.7,56.8c0,5.5-4.5,10.1-10.1,10.1c-5.6,0-10.1-4.6-10.1-10.1c0-5.6,4.5-10.1,10.1-10.1
		C84.2,46.7,88.7,51.3,88.7,56.8z};
	\end{tikzpicture}}{|}}%
}
\newcommand{\orcid}[1]{\href{https://orcid.org/#1}{\orcidicon}}
\theoremstyle{plain}
\newtheorem{theorem}{Theorem}[section]
\newtheorem{proposition}[theorem]{Proposition}
\newtheorem{lemma}[theorem]{Lemma}
\theoremstyle{remark}
\newtheorem{remark}[theorem]{Remark}
\newcommand{\arxiv}[1]{arXiv:\href{http://arxiv.org/abs/#1}{#1}}
\newcommand{\gauss}[2]{\genfrac{[}{]}{0pt}{}{#1}{#2}}
\title{\Large\bfseries The Terwilliger algebra of \\ the twisted Grassmann graph: the thin case}
\author{Hajime Tanaka\,\orcid{0000-0002-5958-0375} \qquad Tao Wang\footnote{Corresponding author} \\
\small Research Center for Pure and Applied Mathematics\\[-0.8ex]
\small Graduate School of Information Sciences\\[-0.8ex]
\small Tohoku University\\[-0.8ex]
\small Sendai, Japan\\
\small\tt htanaka@tohoku.ac.jp \qquad wang.tao.t7@dc.tohoku.ac.jp}
\date{}
\begin{document}

\maketitle

\hypersetup{pdfborder={0 0 1}} % Put back the borders of hyperlinks

\begin{abstract}
The \emph{Terwilliger algebra} $T(x)$ of a finite connected simple graph $\Gamma$ with respect to a vertex $x$ is the complex semisimple matrix algebra generated by the adjacency matrix $A$ of $\Gamma$ and the diagonal matrices $E_i^*(x)=\operatorname{diag}(v_i)$ $(i=0,1,2,\dots)$, where $v_i$ denotes the characteristic vector of the set of vertices at distance $i$ from $x$.
The \emph{twisted Grassmann graph} $\tilde{J}_q(2D+1,D)$ discovered by Van Dam and Koolen in 2005 has two orbits of the automorphism group on its vertex set, and it is known that one of the orbits has the property that $T(x)$ is \emph{thin} whenever $x$ is chosen from it, i.e., every irreducible $T(x)$-module $W$ satisfies $\dim E_i^*(x)W\leqslant 1$ for all $i$.
In this paper, we determine all the irreducible $T(x)$-modules of $\tilde{J}_q(2D+1,D)$ for this ``thin'' case.
\par\smallskip\noindent
\textbf{Mathematics Subject Classifications:} 05E30, 16S50%
% 05E30 : Association schemes, strongly regular graphs
% 16S50 : Endomorphism rings; matrix rings
\end{abstract}

%%%%%%%%%%%%%%%%%%%%
%%%%%%%%%%%%%%%%%%%%
\section{Introduction}

In 2005, Van Dam and Koolen \cite{DK2005IM} discovered a new infinite family of distance-regular graphs with unbounded diameter, which they call the \emph{twisted Grassmann graphs}.
Let $q$ be a prime power, and let $D$ be an integer at least two.
Fix a hyperplane $H$ of the vector space $\mathbb{F}_q^{2D+1}$ over the finite field $\mathbb{F}_q$.
Let $X'$ be the set of $(D+1)$-dimensional subspaces of $\mathbb{F}_q^{2D+1}$ not contained in $H$, and let $X''$ be the set of $(D-1)$-dimensional subspaces of $H$.
The twisted Grassmann graph $\tilde{J}_q(2D+1,D)$ has vertex set $X=X'\sqcup X''$, where two vertices $y$ and $z$ are adjacent whenever
\begin{equation}\label{adjacency}
	\dim y+\dim z-2\dim y\cap z=2.
\end{equation}
The graph $\tilde{J}_q(2D+1,D)$ has the same intersection array as the Grassmann graph $J_q(2D+1,D)$ on the set of $D$-dimensional subspaces of $\mathbb{F}_q^{2D+1}$.
A particularly interesting feature of the twisted Grassmann graphs is that these are not vertex-transitive.
All the previously known families of distance-regular graphs with unbounded diameter are at least vertex-transitive, and many of them are in fact distance-transitive.
For more information on the twisted Grassmann graphs, see \cite{BFK2009EJC,DK2005IM,DKT2016EJC,FKT2006IIG,LHW2017GC,Munemasa2017DCC,MT2011IIG,Tanaka2011EJC,Tanaka2012C}.

In this paper, we discuss the \emph{Terwilliger algebra} of the twisted Grassmann graph $\tilde{J}_q(2D+1,D)$.
In general, for a finite connected simple graph $\Gamma$ with vertex set $V\Gamma$, the Terwilliger algebra $T(x)$ of $\Gamma$ with respect to the \emph{base vertex} $x\in V\Gamma$ is the subalgebra of the $\mathbb{C}$-algebra of complex matrices with rows and columns indexed by $V\Gamma$, generated by the adjacency matrix $A$ of $\Gamma$ and the diagonal matrices $E_i^*(x)=\operatorname{diag}(v_i)$ $(i=0,1,2,\dots)$, where $v_i$ denotes the characteristic vector of the set of vertices at distance $i$ from $x$ \cite{Terwilliger1992JAC,Terwilliger1993JACa,Terwilliger1993JACb,Terwilliger1993N}.
The algebra $T(x)$ is non-commutative whenever $|V\Gamma|>1$, and is semisimple as it is closed under conjugate-transpose.
For the role of the Terwilliger algebra in the study of distance-regular graphs, see \cite{DKT2016EJC} and the references therein.
We note that, for the examples of distance-regular graphs with large diameter known before 2005, the Terwilliger algebra $T(x)$ is independent of the choice of the base vertex $x$ up to isomorphism, since these graphs are vertex-transitive.
On the other hand, the automorphism group of $\tilde{J}_q(2D+1,D)$ has two orbits on the vertex set $X$, namely $X'$ and $X''$, and $T(x)$ indeed depends on the orbit to which $x$ belongs.
More specifically, Bang, Fujisaki, and Koolen \cite{BFK2009EJC} showed that $T(x)$ is thin for $x\in X''$ and is non-thin for $x\in X'$.
Here, $T(x)$ is called \emph{thin} if every irreducible $T(x)$-module $W$ satisfies $\dim E_i^*(x)W\leqslant 1$ for all $i$, and \emph{non-thin} otherwise.

The Terwilliger algebra of the Grassmann graph $J_q(n,D)$ $(n\geqslant 2D)$ is known to be thin, and its irreducible modules have been described; see, e.g., \cite{LIW2020LAA,TTW2021+pre,Terwilliger1993JACb,Watanabe2015M}.
The goal of this paper is to determine the irreducible modules of the Terwilliger algebra $T(x)$ of $\tilde{J}_q(2D+1,D)$ for the thin case, i.e., $x\in X''$.
While the non-thin case for $\tilde{J}_q(2D+1,D)$ currently appears to be beyond our scope, the thin case is already more involved than $J_q(n,D)$.
Our approach is to embed $T(x)$ into a larger matrix algebra, denoted by $\tilde{\mathcal{H}}$, whose rows and columns are indexed by the subspaces of $\mathbb{F}_q^{2D+1}$, not just $X$.
The algebra $\tilde{\mathcal{H}}$ is generated by three types of ``lowering'' matrices and ``raising'' matrices, together with certain $0$-$1$ diagonal matrices.
We note that $\tilde{\mathcal{H}}$ is an extension of the incidence algebra of the subspace lattice of $\mathbb{F}_q^{2D+1}$ (cf.~\cite{Terwilliger1990P}), and that it is also a subalgebra of the centralizer algebra of the parabolic subgroup of $\operatorname{GL}(\mathbb{F}_q^{2D+1})$ which stabilizes both $H$ and $x\,(\subset H)$, acting on the subspaces of $\mathbb{F}_q^{2D+1}$.
A similar approach was previously taken for $J_q(n,D)$, and the representation theory of the corresponding algebra, denoted by $\mathcal{H}$, has been fully developed by Watanabe \cite{Watanabe2017JA} (and also \cite{TTW2021+pre}).
In Section \ref{sec: H}, we collect necessary results from \cite{TTW2021+pre,Watanabe2017JA}.
In Section \ref{sec: extension of H}, after introducing the algebra $\tilde{\mathcal{H}}$, we extend some of the results of Srinivasan \cite{Srinivasan2014EJC}, where he gave, among other results, a linear algebraic interpretation of the Goldman--Rota identity for the number of subspaces of $\mathbb{F}_q^n$.
This enables us to relate the representation theory of $\tilde{\mathcal{H}}$ to that of $\mathcal{H}$, which then leads to a complete description (Theorem \ref{irreducible tilde-H-modules}) of the irreducible $\tilde{\mathcal{H}}$-modules.
Section \ref{sec: irreducible T-modules} is devoted to finding all the irreducible $T(x)$-modules.
The algebra $\tilde{\mathcal{H}}$ contains the $0$-$1$ diagonal matrix $\tilde{\mathcal{E}}^*$ whose $(y,y)$-entry equals one if and only if $y\in X$.
We first observe that $T(x)$ is viewed as a subalgebra of the algebra $\tilde{\mathcal{E}}^*\tilde{\mathcal{H}}\tilde{\mathcal{E}}^*$ with identity $\tilde{\mathcal{E}}^*$.
Put differently, $T(x)$ is contained in the set of principal submatrices indexed by $X$, of the elements of $\tilde{\mathcal{H}}$.
We then show that, for every irreducible $\tilde{\mathcal{H}}$-module $\tilde{\mathcal{W}}$, the subspace $\tilde{\mathcal{E}}^*\tilde{\mathcal{W}}$ is either an irreducible $T(x)$-module, or the direct sum of two irreducible $T(x)$-modules.
We also find the isomorphisms among these irreducible $T(x)$-modules.
In view of the semisimplicity of $T(x)$, this completes the classification of the irreducible $T(x)$-modules.
Our main results are Theorems \ref{irreducible T-modules of type 1}, \ref{irreducible T-modules of type 2}, and \ref{irreducible T-modules of type 3}.
See also the comments after Theorem \ref{irreducible T-modules of type 3}.

Throughout this paper, we fix a prime power $q$ and use the following notation:
\begin{gather*}
	(\alpha)_i=(\alpha;q)_i=\prod_{\ell=0}^{i-1}(1-\alpha q^{\ell}) \qquad (\alpha\in\mathbb{C},\,i\in\mathbb{Z}_{\geqslant 0}), \\
	\gauss{m}{n}=\gauss{m}{n}_q=\frac{(q)_m}{(q)_n(q)_{m-n}} \qquad (m,n\in\mathbb{Z}_{\geqslant 0},\, m\geqslant n).
\end{gather*}
For every non-empty finite set $S$, we let $\operatorname{Mat}_S(\mathbb{C})$ denote the $\mathbb{C}$-algebra of complex matrices with rows and columns indexed by $S$, and we also let $\mathbb{C}S$ denote the $\mathbb{C}$-vector space with basis $S$, on which $\operatorname{Mat}_S(\mathbb{C})$ acts from the left in the standard manner.

%%%%%%%%%%%%%%%%%%%%
%%%%%%%%%%%%%%%%%%%%
\section{The algebra \texorpdfstring{$\mathcal{H}$}{H}}
\label{sec: H}

Let $a$ and $b$ be non-negative integers, and let $P$ be the set of all subspaces of $\mathbb{F}_q^{a+b}$.
We will always fix $x\in P$ with $\dim x=a$.
For $0\leqslant i\leqslant a$ and $0\leqslant j\leqslant b$, let
\begin{equation*}
	P_{i,j}=\{y\in P:\dim x\cap y=i,\,\dim y=i+j\}.
\end{equation*}
We note that the $P_{i,j}$ give a partition of $P$, and that (cf.~\cite[Lemma 9.3.2]{BCN1989B})
\begin{equation*}
	|P_{i,j}|=q^{(a-i)j}\gauss{a}{i}\gauss{b}{j}.
\end{equation*}
In particular, if we let $s_q(n)$ denote the number of subspaces of $\mathbb{F}_q^n$, then we have
\begin{equation}\label{number of subspaces}
	s_q(a+b)=|P|=\sum_{i=0}^a\sum_{j=0}^bq^{(a-i)j}\gauss{a}{i}\gauss{b}{j}.
\end{equation}
For convenience, we set $P_{i,j}:=\emptyset$ for $i,j\in\mathbb{Z}$ unless $0\leqslant i\leqslant a$ and $0\leqslant j\leqslant b$.

For $0\leqslant i\leqslant a$ and $0\leqslant j\leqslant b$, let $\mathcal{E}_{i,j}^*\in \operatorname{Mat}_P(\mathbb{C})$ be the diagonal matrix with $(y,y)$-entry
\begin{equation*}
	(\mathcal{E}_{i,j}^*)_{y,y}=\begin{cases} 1 & \text{if} \ y\in P_{i,j}, \\ 0 & \text{otherwise}, \end{cases} \qquad (y\in P).
\end{equation*}
Moreover, let $\mathcal{L}_1,\mathcal{L}_2,\mathcal{R}_1,\mathcal{R}_2\in \operatorname{Mat}_P(\mathbb{C})$ be such that, for $y,z\in P$ with $z\in P_{i,j}$,
\begin{align*}
	(\mathcal{L}_1)_{y,z}&=\begin{cases} 1 & \text{if} \ y\in P_{i-1,j}, \ y\subset z, \\ 0 & \text{otherwise}, \end{cases} & (\mathcal{R}_1)_{y,z}&=\begin{cases} 1 & \text{if} \ y\in P_{i+1,j}, \ y\supset z, \\ 0 & \text{otherwise}, \end{cases} \\
	(\mathcal{L}_2)_{y,z}&=\begin{cases} 1 & \text{if} \ y\in P_{i,j-1}, \ y\subset z, \\ 0 & \text{otherwise}, \end{cases} & (\mathcal{R}_2)_{y,z}&=\begin{cases} 1 & \text{if} \ y\in P_{i,j+1}, \ y\supset z, \\ 0 & \text{otherwise}. \end{cases}
\end{align*}
We note that $(\mathcal{L}_1)^{\mathsf{T}}=\mathcal{R}_1$ and $(\mathcal{L}_2)^{\mathsf{T}}=\mathcal{R}_2$, where ${}^{\mathsf{T}}$ denotes transpose.
Let $\mathcal{H}$ be the subalgebra of $\operatorname{Mat}_P(\mathbb{C})$ generated by $\mathcal{L}_1,\mathcal{L}_2,\mathcal{R}_1,\mathcal{R}_2$, and all the $\mathcal{E}_{i,j}^*$.
The algebra $\mathcal{H}$ is semisimple as it is closed under conjugate-transpose.
We note that every irreducible $\mathcal{H}$-module appears in $\mathbb{C}P$ up to isomorphism.

Let $\mathcal{W}$ be an irreducible $\mathcal{H}$-module.
Let
\begin{align*}
	\nu &= \min\{i:(\mathcal{E}_{i,0}^*+\dots+\mathcal{E}_{i,b}^*)\mathcal{W}\ne 0\}, \\
	\nu' &= \max\{i:(\mathcal{E}_{i,0}^*+\dots+\mathcal{E}_{i,b}^*)\mathcal{W}\ne 0\}, \\
	\mu &= \min\{j:(\mathcal{E}_{0,j}^*+\dots+\mathcal{E}_{a,j}^*)\mathcal{W}\ne 0\}, \\
	\mu' &= \max\{j:(\mathcal{E}_{0,j}^*+\dots+\mathcal{E}_{a,j}^*)\mathcal{W}\ne 0\}.
\end{align*}
We call $(\nu,\mu)$ (resp.~$(\nu',\mu')$) the \emph{lower endpoint} (resp.~\emph{upper endpoint}) of $\mathcal{W}$.
It is known (\cite[Lemma 8.3]{Watanabe2017JA}) that
\begin{equation*}
	\nu+\nu'+\mu+\mu'=a+b.
\end{equation*}
We let
\begin{equation*}
	\rho=a-\nu-\nu'=\mu+\mu'-b,
\end{equation*}
and call $\rho$ the \emph{index} of $\mathcal{W}$.

\begin{theorem}[{\cite{TTW2021+pre,Watanabe2017JA}}]\label{irreducible H-modules}
There exists an irreducible $\mathcal{H}$-module with lower endpoint $(\nu,\mu)$ and index $\rho$ if and only if
\begin{equation*}
	0\leqslant \nu\leqslant a, \qquad 0\leqslant\mu\leqslant b, \qquad \max\{0,2\mu-b\}\leqslant \rho\leqslant \min\{a-2\nu,\mu\}.
\end{equation*}
Let $\mathcal{W}$ be an irreducible $\mathcal{H}$-module with lower endpoint $(\nu,\mu)$ and index $\rho$.
Then $\mathcal{W}$ has a basis $w_{i,j}$ $(\nu\leqslant i\leqslant a-\nu-\rho, \ \mu\leqslant j\leqslant b-\mu+\rho)$ such that $w_{i,j}\in\mathcal{E}_{i,j}^*\mathcal{W}$, and
\begin{align*}
	\mathcal{L}_1w_{i,j}&=q^{\nu+j}\gauss{a-\nu-\rho-i+1}{1}\gauss{i-\nu}{1}w_{i-1,j},  & \mathcal{R}_1w_{i,j}&=w_{i+1,j}, \\
	\mathcal{L}_2w_{i,j}&=q^{a-\nu+\mu-\rho}\gauss{b-\mu+\rho-j+1}{1}\gauss{j-\mu}{1}w_{i,j-1}, & \mathcal{R}_2w_{i,j}&=q^{\nu-i}w_{i,j+1}
\end{align*}
for all $i,j$, where we set $w_{\nu-1,j}=w_{a-\nu-\rho+1,j}=w_{i,\mu-1}=w_{i,b-\mu+\rho+1}:=0$.
In particular, the isomorphism class of $\mathcal{W}$ is determined by $\nu,\mu$, and $\rho$.
Moreover, the multiplicity $m_{\nu,\mu,\rho}$ of $\mathcal{W}$ in $\mathbb{C}P$ is given by
\begin{equation*}
	m_{\nu,\mu,\rho}=\frac{ (-1)^{\rho} (q)_a (q)_b (1-q^{a-2\nu-\rho+1}) (1-q^{b-2\mu+\rho+1}) q^{\nu+\mu-\rho+\binom{\rho}{2}} }{ (q)_{a-\nu-\rho+1} (q)_{b-\mu+1} (q)_{\nu} (q)_{\mu-\rho} (q)_{\rho}}.
\end{equation*}
\end{theorem}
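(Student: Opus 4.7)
The plan is, for each valid triple $(\nu,\mu,\rho)$, to exhibit an irreducible $\mathcal{H}$-module with the stated basis and action, to show that every irreducible arises this way, and finally to count multiplicities in $\mathbb{C}P$. Morally, $\mathcal{H}$ should be understood as generated by two coupled $q$-oscillator actions: $(\mathcal{L}_1,\mathcal{R}_1)$ moves within the $x$-direction, while $(\mathcal{L}_2,\mathcal{R}_2)$ moves transversally.

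The first step is to establish, by direct counting within the $P_{i,j}$, the key relations among $\mathcal{L}_k,\mathcal{R}_k$, and $\mathcal{E}_{i,j}^*$. I would expect $\mathcal{L}_1\mathcal{L}_2=\mathcal{L}_2\mathcal{L}_1$ and $\mathcal{R}_1\mathcal{R}_2=\mathcal{R}_2\mathcal{R}_1$ (possibly after a $q$-twist through the $\mathcal{E}_{i,j}^*$), suitable $q$-commutation rules for the mixed brackets $[\mathcal{L}_1,\mathcal{R}_2]$ and $[\mathcal{L}_2,\mathcal{R}_1]$, and for each $k\in\{1,2\}$ an identity expressing $[\mathcal{L}_k,\mathcal{R}_k]$ on each weight space $\mathcal{E}_{i,j}^*\mathbb{C}P$ as a scalar given by a difference of Gaussian binomials. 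These relations should present $\mathcal{H}$ as the image of a ``double $q$-$\mathfrak{sl}_2$''-type algebra whose finite-dimensional irreducibles can be analyzed directly.

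Given an irreducible $\mathcal{H}$-module $\mathcal{W}$ with lower endpoint $(\nu,\mu)$, any nonzero $w_{\nu,\mu}\in\mathcal{E}_{\nu,\mu}^*\mathcal{W}$ is annihilated by $\mathcal{L}_1$ and $\mathcal{L}_2$ by minimality. I would set $w_{i,j}:=q^{(i-\nu)(j-\mu)}\mathcal{R}_2^{j-\mu}\mathcal{R}_1^{i-\nu}w_{\nu,\mu}$ to match the stated $\mathcal{R}_k$-action, and then use the commutation relations to evaluate $\mathcal{L}_kw_{i,j}$ inductively. The coefficient $q^{\nu+j}\gauss{a-\nu-\rho-i+1}{1}\gauss{i-\nu}{1}$ of $w_{i-1,j}$ in $\mathcal{L}_1w_{i,j}$ vanishes both at $i=\nu$ (as it must, since $w_{\nu,\mu}$ is lowest) and at $i=a-\nu-\rho+1$, forcing $w_{i,j}=0$ for $i>a-\nu-\rho$ in the irreducible quotient; this pins down $\nu'=a-\nu-\rho$, and by a symmetric analysis of $\mathcal{L}_2\mathcal{R}_2$, $\mu'=b-\mu+\rho$, giving $\nu+\nu'+\mu+\mu'=a+b$. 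The inequalities $\max\{0,2\mu-b\}\leqslant\rho\leqslant\min\{a-2\nu,\mu\}$ then drop out of requiring $\nu\leqslant\nu'$, $\mu\leqslant\mu'$, and nondegeneracy of the scalars acting on $w_{\nu,\mu}$.

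To compute $m_{\nu,\mu,\rho}$, I would identify it with the dimension of the space of ``lowest-weight'' vectors of type $(\nu,\mu,\rho)$, namely the piece of $\ker\mathcal{L}_1\cap\ker\mathcal{L}_2\cap\mathcal{E}_{\nu,\mu}^*\mathbb{C}P$ cut out by a further eigenvalue condition labeled by $\rho$ (read off from the previous step). Equivalently, one writes $|P_{i,j}|$ as a sum over $(\nu,\mu,\rho)$ of $m_{\nu,\mu,\rho}$ times the number of times the weight $(i,j)$ appears in the standard basis of a module of type $(\nu,\mu,\rho)$, and inverts this triangular system by a $q$-M\"obius-type argument on the subspace lattice. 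The main obstacle is the closed form: bringing the answer into the factored product of the theorem requires careful $q$-hypergeometric bookkeeping, likely via a $q$-Vandermonde sum or a Goldman--Rota-style inversion, and this is where the main technical weight of the proof lies.
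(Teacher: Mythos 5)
First, a point of comparison: the paper does not prove Theorem \ref{irreducible H-modules} at all --- it is imported wholesale from \cite{Watanabe2017JA} and \cite{TTW2021+pre} (the introduction says these results are ``collected''), so there is no internal proof to measure your argument against. Your outline is broadly consistent in spirit with how the result is established in those references: Watanabe's route is indeed to present $\mathcal{H}$ via coupled $q$-deformed raising/lowering relations tied to $U_q(\widehat{\mathfrak{sl}}_2)$ and to run a lowest-weight analysis. So the strategy is the right one.

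As a proof, however, the proposal has genuine gaps. First, every relation you rely on is only ``expected'': the values of $[\mathcal{L}_k,\mathcal{R}_k]$ on each $\mathcal{E}_{i,j}^*\mathbb{C}P$, the mixed $q$-commutators, and the interaction with the $\mathcal{E}_{i,j}^*$ are never computed, yet all the displayed coefficients depend on them. In particular, the fact that a \emph{single} extra parameter $\rho$ suffices (equivalently, the identity $\nu+\nu'+\mu+\mu'=a+b$, cited in the paper as \cite[Lemma 8.3]{Watanabe2017JA}) is a nontrivial consequence of the specific coupling between the two directions; it does not follow from treating them as two independent oscillators, and your sketch does not explain where this constraint comes from. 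Second, you take ``any nonzero $w_{\nu,\mu}\in\mathcal{E}_{\nu,\mu}^*\mathcal{W}$'', but $\nu$ and $\mu$ are defined by two separate minima, and it is not automatic that they are attained on a common weight space, i.e., that $\mathcal{E}_{\nu,\mu}^*\mathcal{W}\ne 0$; nor is it shown that this space is one-dimensional, which you need both for the basis $w_{i,j}$ to be well defined and for the isomorphism class to be pinned down by $(\nu,\mu,\rho)$. Third, the existence half of the ``if and only if'' --- actually exhibiting an irreducible module inside $\mathbb{C}P$ for every admissible triple --- is announced but never carried out; analyzing a hypothetical irreducible only yields necessity of the inequalities. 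Finally, the multiplicity formula, which you correctly identify as the technical core, is entirely deferred. The proposal is therefore a reasonable plan, not a proof.
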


\begin{remark}
The generators of the algebra $\mathcal{H}$ commute with the action on $P$ of the maximal parabolic subgroup of $\mathrm{GL}(\mathbb{F}_q^{a+b})$ stabilizing the subspace $x$.
Hence $\mathcal{H}$ is a subalgebra of the centralizer algebra of this parabolic subgroup.
However, by comparing Theorem \ref{irreducible H-modules} with the results of Dunkl \cite{Dunkl1978MM} we can show that the two algebras are in fact equal.
In particular, the formula for the multiplicity $m_{\nu,\mu,\rho}$ above agrees with \cite[Proposition 4.15]{Dunkl1978MM} under the replacement $(\nu,\mu,\rho)\mapsto(m,n+r,r)$.
See also \cite{Watanabe2015M}.
\end{remark}

%%%%%%%%%%%%%%%%%%%%
%%%%%%%%%%%%%%%%%%%%
\section{The algebra \texorpdfstring{$\tilde{\mathcal{H}}$}{tilde H}}
\label{sec: extension of H}

Let $a$ and $b$ be as in the previous section.
Let $\tilde{P}$ be the set of all subspaces of $\mathbb{F}_q^{a+b+1}$.
We will again fix $x\in \tilde{P}$ with $\dim x=a$, and we will also fix a hyperplane $H\in\tilde{P}$ containing $x$.
For $0\leqslant i\leqslant a$, $0\leqslant j\leqslant b$, and $0\leqslant k\leqslant 1$, let
\begin{equation*}
	\tilde{P}_{i,j,k}=\{y\in \tilde{P}:\dim x\cap y=i,\,\dim H\cap y=i+j,\,\dim y=i+j+k\}.
\end{equation*}
Then the $\tilde{P}_{i,j,k}$ give a partition of $\tilde{P}$, and
\begin{equation*}
	|\tilde{P}_{i,j,k}|=q^{(a-i)j+(a+b-i-j)k}\gauss{a}{i}\gauss{b}{j}.
\end{equation*}
For convenience, we set $\tilde{P}_{i,j,k}:=\emptyset$ for $i,j,k\in\mathbb{Z}$ unless $0\leqslant i\leqslant a$, $0\leqslant j\leqslant b$, and $0\leqslant k\leqslant 1$.

For $0\leqslant i\leqslant a$, $0\leqslant j\leqslant b$, and $0\leqslant k\leqslant 1$, let $\tilde{\mathcal{E}}_{i,j,k}^*\in \operatorname{Mat}_{\tilde{P}}(\mathbb{C})$ be the diagonal matrix with $(y,y)$-entry
\begin{equation*}
	(\tilde{\mathcal{E}}_{i,j,k}^*)_{y,y}=\begin{cases} 1 & \text{if} \ y\in \tilde{P}_{i,j,k}, \\ 0 & \text{otherwise}, \end{cases} \qquad (y\in \tilde{P}).
\end{equation*}
Moreover, let $\tilde{\mathcal{L}}_1,\tilde{\mathcal{L}}_2,\tilde{\mathcal{L}}_3,\tilde{\mathcal{R}}_1,\tilde{\mathcal{R}}_2,\tilde{\mathcal{R}}_3\in \operatorname{Mat}_{\tilde{P}}(\mathbb{C})$ be such that, for $y,z\in \tilde{P}$ with $z\in \tilde{P}_{i,j,k}$,
\begin{align*}
	(\tilde{\mathcal{L}}_1)_{y,z}&=\begin{cases} 1 & \text{if} \ y\in \tilde{P}_{i-1,j,k}, \ y\subset z, \\ 0 & \text{otherwise}, \end{cases} & (\tilde{\mathcal{R}}_1)_{y,z}&=\begin{cases} 1 & \text{if} \ y\in \tilde{P}_{i+1,j,k}, \ y\supset z, \\ 0 & \text{otherwise}, \end{cases} \\
	(\tilde{\mathcal{L}}_2)_{y,z}&=\begin{cases} 1 & \text{if} \ y\in \tilde{P}_{i,j-1,k}, \ y\subset z, \\ 0 & \text{otherwise}, \end{cases} & (\tilde{\mathcal{R}}_2)_{y,z}&=\begin{cases} 1 & \text{if} \ y\in \tilde{P}_{i,j+1,k}, \ y\supset z, \\ 0 & \text{otherwise}, \end{cases} \\
	(\tilde{\mathcal{L}}_3)_{y,z}&=\begin{cases} 1 & \text{if} \ y\in \tilde{P}_{i,j,k-1}, \ y\subset z, \\ 0 & \text{otherwise}, \end{cases} & (\tilde{\mathcal{R}}_3)_{y,z}&=\begin{cases} 1 & \text{if} \ y\in \tilde{P}_{i,j,k+1}, \ y\supset z, \\ 0 & \text{otherwise}. \end{cases}
\end{align*}
We note that $(\tilde{\mathcal{L}}_1)^{\mathsf{T}}=\tilde{\mathcal{R}}_1$, $(\tilde{\mathcal{L}}_2)^{\mathsf{T}}=\tilde{\mathcal{R}}_2$, and $(\tilde{\mathcal{L}}_3)^{\mathsf{T}}=\tilde{\mathcal{R}}_3$.
Let $\tilde{\mathcal{H}}$ be the subalgebra of $\operatorname{Mat}_{\tilde{P}}(\mathbb{C})$ generated by $\tilde{\mathcal{L}}_1,\tilde{\mathcal{L}}_2,\tilde{\mathcal{L}}_3,\tilde{\mathcal{R}}_1,\tilde{\mathcal{R}}_2,\tilde{\mathcal{R}}_3$, and all the $\tilde{\mathcal{E}}_{i,j,k}^*$.
The algebra $\tilde{\mathcal{H}}$ is semisimple as it is closed under conjugate-transpose.
We note that every irreducible $\tilde{\mathcal{H}}$-module appears in $\mathbb{C}\tilde{P}$ up to isomorphism.

Our goal in this section is to describe the irreducible $\tilde{\mathcal{H}}$-modules.
To this end, we extend some of the results of Srinivasan \cite{Srinivasan2014EJC}.
Let $G$ be the subgroup of $\mathrm{SL}(\mathbb{F}_q^{a+b+1})$ consisting of the elements which fix every vector in $H$.
If we fix a basis $\mathbf{u}_1,\dots,\mathbf{u}_{a+b+1}$ of $\mathbb{F}_q^{a+b+1}$ such that $H=\operatorname{span}\{\mathbf{u}_1,\dots,\mathbf{u}_{a+b}\}$, then the matrices representing the elements of $G$ with respect to this basis are of the form
\begin{equation}\label{matrix form}
	\begin{bmatrix} 1 &&&& \!\!\!\!\alpha_1 \\[-.8mm] & \!\!\!\!\cdot &&& \!\!\!\!\!\cdot \\[-.8mm] && \!\!\!\!\!\cdot && \!\!\!\!\!\cdot \\[-.8mm] &&& \!\!\!\!\!1 & \!\!\!\!\alpha_{a+b} \\ &&&& \!\!\!\!\!1 \end{bmatrix} \qquad (\alpha_1,\dots,\alpha_{a+b}\in\mathbb{F}_q).
\end{equation}
Observe that $G$ is abelian and is isomorphic to the additive group $\mathbb{F}_q^{a+b}$, and that the $\tilde{P}_{i,j,k}$ are $G$-invariant.
Let
\begin{equation*}
	\tilde{P}_0=\bigsqcup_{i=0}^a\bigsqcup_{j=0}^b\tilde{P}_{i,j,0}, \qquad \tilde{P}_1=\bigsqcup_{i=0}^a\bigsqcup_{j=0}^b\tilde{P}_{i,j,1}.
\end{equation*}
Thus, $\tilde{P}_0$ is the set of subspaces of $H$, and $\tilde{P}_1$ is the set of subspaces of $\mathbb{F}_q^{a+b+1}$ not contained in $H$.
We define the equivalence relation $\sim$ on $\tilde{P}_1$ by
\begin{equation*}
	y\sim z \quad \text{if and only if} \quad H\cap y=H\cap z  \qquad (y,z\in\tilde{P}_1).
\end{equation*}
We observe that the equivalence classes of this relation are precisely the $G$-orbits on $\tilde{P}_1$.
For every $y\in \tilde{P}_1$, let $G_y$ denote the stabilizer of $y$ in $G$.

Let $\widehat{G}$ be the character group of $G$ with trivial character $1_G$.
For $0\leqslant i\leqslant a$ and $0\leqslant j\leqslant b$, let $\psi_{i,j}$ be the permutation character of $G$ on $\tilde{P}_{i,j,1}$.
Note that $(g-\mathrm{id})(\mathbb{F}_q^{a+b+1})$ is a one-dimensional subspace of $H$ for every $g\in G$ with $g\ne \operatorname{id}$ (cf.~\eqref{matrix form}).
The elements of $G$ such that $(g-\operatorname{id})(\mathbb{F}_q^{a+b+1})\subset x$ form a subgroup of $G$ of order $q^a$, which we denote by $K$.
The following extends \cite[Theorem 2.3]{Srinivasan2014EJC}, and the proof is straightforward.

\begin{lemma}\label{characters}
The following hold:
\begin{enumerate}
\item For $0\leqslant i\leqslant a$, $0\leqslant j\leqslant b$, and $g\in G$, we have
\begin{equation*}
	\psi_{i,j}(g)=\begin{cases} q^{(a-i)j+(a+b-i-j)}\gauss{a}{i}\gauss{b}{j} & \text{if} \ g=\operatorname{id}, \\ q^{(a-i)j+(a+b-i-j)}\gauss{a-1}{i-1}\gauss{b}{j} & \text{if} \ g\ne \operatorname{id}, \ g\in K, \\ q^{(a-i)(j-1)+(a+b-i-j)}\gauss{a}{i}\gauss{b-1}{j-1} & \text{if} \ g\in G\setminus K. \end{cases}
\end{equation*}
\item For $0\leqslant i\leqslant a$, $0\leqslant j\leqslant b$, and $\chi\in \widehat{G}$, we have
\begin{equation*}
	[\chi,\psi_{i,j}]=\begin{cases} q^{(a-i)j}\gauss{a}{i}\gauss{b}{j} & \text{if} \ \chi=1_G,   \\ q^{(a-i)j}\gauss{a}{i}\gauss{b-1}{j} & \text{if} \ \chi\ne 1_G, \ \chi|_K=1_K, \\ q^{(a-i-1)j}\gauss{a-1}{i}\gauss{b}{j} & \text{if} \ \chi|_K\ne 1_K, \end{cases}
\end{equation*}
where $[\cdot,\cdot]$ denotes the usual inner product of characters.
\end{enumerate}
\end{lemma}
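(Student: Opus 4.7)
For part (i), the value $\psi_{i,j}(g)$ is the number of $y\in\tilde{P}_{i,j,1}$ fixed by $g$. The case $g=\operatorname{id}$ is just $|\tilde{P}_{i,j,1}|$, already recorded. For $g\ne\operatorname{id}$, the matrix shape \eqref{matrix form} shows that $g-\operatorname{id}$ has rank one, with $\ker(g-\operatorname{id})=H$ and $\operatorname{Im}(g-\operatorname{id})=\langle u\rangle$ for some nonzero $u\in H$; moreover, $u\in x$ precisely when $g\in K$. For $y\in\tilde{P}_{i,j,1}$, decomposing $y=(y\cap H)\oplus\langle v\rangle$ with $v\notin H=\ker(g-\operatorname{id})$ yields $(g-\operatorname{id})(y)=\langle u\rangle$, so $g$ fixes $y$ if and only if $u\in y$, equivalently $u\in y\cap H$ (since $u\in H$).

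The plan is then to count such $y$ in three independent steps. Step~1 picks $x\cap y$, an $i$-dimensional subspace of $x$ containing $u$ when $g\in K$ ($\gauss{a-1}{i-1}$ choices) and otherwise unrestricted ($\gauss{a}{i}$ choices). Step~2 picks $H\cap y$, an $(i+j)$-dimensional subspace of $H$ containing $x\cap y$, meeting $x$ in exactly $x\cap y$, and containing $u$. Passing to the quotient $H/(x\cap y)$ and applying the formula $|P_{i',j'}|=q^{(a-i')j'}\gauss{a}{i'}\gauss{b}{j'}$ from Section~\ref{sec: H} with $i'=0$, this gives $q^{(a-i)j}\gauss{b}{j}$ when $g\in K$ (the condition on $u$ being automatic from Step~1), while when $g\in G\setminus K$ one further quotients by the line $\langle\bar u\rangle$ (which meets $\bar x=x/(x\cap y)$ trivially since $u\notin x$) to obtain $q^{(a-i)(j-1)}\gauss{b-1}{j-1}$. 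Step~3 extends $H\cap y$ to $y$ with $y\not\subset H$, equivalent to choosing a line in $\mathbb{F}_q^{a+b+1}/(H\cap y)$ outside $H/(H\cap y)$, giving $q^{a+b-i-j}$. Multiplying the three counts yields (i).

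For part (ii), the plan is to apply $[\chi,\psi_{i,j}]=|G|^{-1}\sum_{g\in G}\chi(g)\psi_{i,j}(g)$ (using that $\psi_{i,j}$ is real) and split the sum over the three sets $\{\operatorname{id}\}$, $K\setminus\{\operatorname{id}\}$, and $G\setminus K$, on each of which $\psi_{i,j}$ is constant by (i). The orthogonality relations $\sum_{g\in G}\chi(g)=0$ for $\chi\ne 1_G$ and $\sum_{g\in K}\chi(g)=0$ for $\chi|_K\ne 1_K$ reduce each of the three character cases to a short linear combination of the three constants from (i). Standard $q$-Pascal identities such as $\gauss{a}{i}-\gauss{a-1}{i-1}=q^i\gauss{a-1}{i}$ and $\gauss{b}{j}-\gauss{b-1}{j-1}=q^j\gauss{b-1}{j}$, or equivalently $(q^a-1)\gauss{a-1}{i-1}=(q^i-1)\gauss{a}{i}$, then match the answers to the stated values.

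The main bookkeeping obstacle is Step~2 for $g\in G\setminus K$: the extra requirement that $H\cap y$ contain a line $\langle u\rangle$ lying outside $x$ forces a secondary quotient by $\langle\bar u\rangle$ inside $H/(x\cap y)$ before the transversal count from Section~\ref{sec: H} can be invoked, and the resulting parameter shift $(b,j)\mapsto(b-1,j-1)$ is precisely what distinguishes the third case of (i) from the second. The rest is routine character orthogonality combined with $q$-binomial identities.
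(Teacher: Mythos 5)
Your proof is correct, and it follows the standard route (fixed-point counting for the permutation character values, then character orthogonality with the $q$-Pascal identities) that the paper has in mind when it declares the proof ``straightforward'' and refers to \cite[Theorem 2.3]{Srinivasan2014EJC}, whose proof is exactly this argument. The key observations -- that $g$ fixes $y\in\tilde{P}_{i,j,1}$ iff $\operatorname{Im}(g-\operatorname{id})\subset y\cap H$, and the resulting shift $(b,j)\mapsto(b-1,j-1)$ when $u\notin x$ -- are handled correctly, including the degenerate cases $i=0$ and $j=0$ via the convention $\gauss{m}{-1}=0$.
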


For $0\leqslant i\leqslant a$, $0\leqslant j\leqslant b$, and $\chi\in \widehat{G}$, let $M(\chi)$ and $M(\chi)_{i,j}$ be the homogeneous components of $\chi$ in $\mathbb{C}\tilde{P}_1$ and $\mathbb{C}\tilde{P}_{i,j,1}$, respectively.
Note that
\begin{equation}\label{dimension=multiplicity}
	\dim M(\chi)_{i,j}=[\chi,\psi_{i,j}].
\end{equation}
Hence it follows from Lemma \ref{characters}\,(ii) that
\begin{equation*}
	M(\chi)=\begin{cases} \bigoplus_{i=0}^a\bigoplus_{j=0}^b M(1_G)_{i,j} & \text{if} \ \chi=1_G,  \\ \bigoplus_{i=0}^a\bigoplus_{j=0}^{b-1} M(\chi)_{i,j} & \text{if} \ \chi\ne 1_G, \ \chi|_K=1_K, \\ \bigoplus_{i=0}^{a-1}\bigoplus_{j=0}^b M(\chi)_{i,j} & \text{if} \ \chi|_K\ne 1_K, \end{cases}
\end{equation*}
and that
\begin{equation}\label{dimensions of G-components}
	\dim M(\chi)=\begin{cases} s_q(a+b) & \text{if} \ \chi=1_G,  \\ s_q(a+b-1) & \text{if} \ \chi\ne 1_G, \end{cases}
\end{equation}
by virtue of \eqref{number of subspaces}.
Moreover,
\begin{equation*}
	\mathbb{C}\tilde{P}_1=\bigoplus_{\chi\in\widehat{G}}M(\chi).
\end{equation*}
Observe that $G$ acts trivially on $\tilde{P}_0$, and that the generators of $\tilde{\mathcal{H}}$ commute with the action of $G$, from which it follows that $\mathbb{C}\tilde{P}_0\bigoplus M(1_G)$ and the $M(\chi)$ $(\chi\ne 1_G)$ are $\tilde{\mathcal{H}}$-modules.

For $\chi\in\widehat{G}$, let
\begin{equation*}
	e_{\chi}=\frac{1}{q^{a+b}}\sum_{g\in G}\chi(g^{-1})g\in\mathbb{C}G.
\end{equation*}
Note that the $e_{\chi}$ are the (central) primitive idempotents of the group algebra $\mathbb{C}G$.
In particular, we have
\begin{equation}\label{apply central primitive idempotent}
	e_{\chi}\mathbb{C}\tilde{P}_{i,j,1}=M(\chi)_{i,j} \qquad (0\leqslant i\leqslant a,\, 0\leqslant j\leqslant b).
\end{equation}

\begin{lemma}[{\cite[Lemma 2.4]{Srinivasan2014EJC}}]\label{projection}
Let $y\in\tilde{P}_1$ and $\chi\in\widehat{G}$.
Then $e_{\chi}y\ne 0$ if and only if $\chi|_{G_y}=1_{G_y}$.
\end{lemma}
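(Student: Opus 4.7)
The plan is to expand $e_\chi y$ directly from the definition and group the sum over $G$ by cosets of the stabilizer $G_y$. Writing
\[
	e_\chi y = \frac{1}{q^{a+b}}\sum_{g\in G}\chi(g^{-1})\,gy,
\]
I would partition $G$ as a disjoint union of cosets $gG_y$ and, using that $(gh)y = gy$ for every $h\in G_y$, rewrite the right-hand side as
\[
	e_\chi y = \frac{1}{q^{a+b}}\sum_{gG_y\in G/G_y}\chi(g^{-1})\left(\sum_{h\in G_y}\chi(h^{-1})\right)gy.
\]
By the orthogonality of characters applied to the subgroup $G_y$, the inner sum equals $|G_y|$ when $\chi|_{G_y}=1_{G_y}$ and vanishes otherwise. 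This immediately gives the ``only if'' direction.

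For the converse, assuming $\chi|_{G_y}=1_{G_y}$, I would argue that
\[
	e_\chi y = \frac{|G_y|}{q^{a+b}}\sum_{gG_y\in G/G_y}\chi(g^{-1})\,gy
\]
is nonzero because the coset representatives yield pairwise distinct elements $gy$ of $\tilde{P}_1$ (this is exactly what the stabilizer $G_y$ is defined to record), so the displayed expression is a linear combination of distinct basis vectors in $\mathbb{C}\tilde{P}_1$ with coefficients of constant nonzero modulus $|G_y|/q^{a+b}$.

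There is no real obstacle here: both directions reduce to the standard fact that averaging a character against a permutation action produces the projection onto the $\chi$-isotypic component, and the only thing one needs from the specific setup is that $G_y$ genuinely is the stabilizer so that the coset sum has no repeated terms. The argument is essentially identical to \cite[Lemma 2.4]{Srinivasan2014EJC}; the only adaptation is that here $G$ is the group of transvections fixing $H$ pointwise rather than the slightly different group used by Srinivasan, but since the proof only uses the abelian group structure of $G$ and the orbit-stabilizer relation on $\tilde{P}_1$, nothing changes.
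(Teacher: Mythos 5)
Your proof is correct and complete: the coset decomposition over $G_y$, character orthogonality on the subgroup, and the linear independence of the distinct orbit elements $gy$ together give both directions. The paper itself offers no proof of this lemma --- it simply cites \cite[Lemma 2.4]{Srinivasan2014EJC} --- and your argument is the standard one underlying that reference, so there is nothing further to compare.
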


For the rest of this section, we will fix $u\in \tilde{P}_{0,0,1}$.
For $y\in\tilde{P}_0$, we will use $y\vee u$ to denote the subspace of $\mathbb{F}_q^{a+b+1}$ spanned by $y$ and $u$, in order to avoid confusion with the addition in $\mathbb{C}\tilde{P}$.
The following is essentially from \cite[Theorem 2.5\,(i)--(iii)]{Srinivasan2014EJC}.

\begin{lemma}
\label{basis}
Let $\chi\in\widehat{G}$.
Then the following hold:
\begin{enumerate}
\item Let $y,z\in\tilde{P}_1$. If $y\sim z$ then $e_{\chi}y$ is a non-zero scalar multiple of $e_{\chi}z$.
\item Let $y,z\in\tilde{P}_1$. If $y\subset z$ and $e_{\chi}z\ne 0$ then $e_{\chi}y\ne 0$.
\item For $0\leqslant i\leqslant a$ and $0\leqslant j\leqslant b$, the non-zero vectors among the vectors
\begin{equation*}
	e_{\chi}(y\vee u) \qquad (y\in\tilde{P}_{i,j,0})
\end{equation*}
form a basis of $M(\chi)_{i,j}$.
\end{enumerate}
\end{lemma}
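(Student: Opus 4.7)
The plan is to reduce all three parts to two ingredients: the standard identity $e_{\chi}g=\chi(g)\,e_{\chi}$ for $g\in G$, which converts $G$-translation into scalar multiplication by the root of unity $\chi(g)$; and Lemma~\ref{projection}, which tells us exactly when $e_{\chi}y$ is non-zero.

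For (i), the $G$-orbits on $\tilde{P}_1$ coincide with the $\sim$-classes (as noted in the text), so $y\sim z$ gives $z=gy$ for some $g\in G$, and then $e_{\chi}z=e_{\chi}(gy)=\chi(g)\,e_{\chi}y$ with $\chi(g)\in\mathbb{C}^{\times}$, which is the claimed scalar relation.

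For (ii), Lemma~\ref{projection} reduces the claim to showing $G_y\subset G_z$ whenever $y\subset z$ in $\tilde{P}_1$. Fixing any $v\in y\setminus H$ (so $v\in z\setminus H$ as well), and using that each $g\in G$ is the identity on $H$ with $(g-\operatorname{id})(\mathbb{F}_q^{a+b+1})\subset H$, a direct computation yields
\[
G_w=\{g\in G:(g-\operatorname{id})v\in H\cap w\}\qquad(w\in\{y,z\}),
\]
whence the desired inclusion is immediate from $H\cap y\subset H\cap z$.

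For (iii), I would first check that $y\vee u\in\tilde{P}_{i,j,1}$ for each $y\in\tilde{P}_{i,j,0}$: since $u\in\tilde{P}_{0,0,1}$, $y\subset H$, and $u\not\subset H$, one verifies $\dim(y\vee u)=\dim y+1$, $H\cap(y\vee u)=y$, and $x\cap(y\vee u)=x\cap y$. Hence $y\mapsto y\vee u$ picks out exactly one representative from each $\sim$-class in $\tilde{P}_{i,j,1}$. Spanning of $M(\chi)_{i,j}=e_{\chi}\mathbb{C}\tilde{P}_{i,j,1}$ (cf.~\eqref{apply central primitive idempotent}) by the vectors $e_{\chi}(y\vee u)$ $(y\in\tilde{P}_{i,j,0})$ then follows from (i), while the non-zero ones are linearly independent by a support argument: distinct $y$'s yield $y\vee u$ in distinct $G$-orbits, so the vectors $e_{\chi}(y\vee u)$ have pairwise disjoint supports in $\tilde{P}_{i,j,1}$. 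The only obstacle is the routine bookkeeping in (iii), namely verifying that $y\mapsto y\vee u$ gives a complete system of $\sim$-class representatives; everything else is a direct application of the two ingredients above.
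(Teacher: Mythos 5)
Your proposal is correct and follows essentially the same route as the paper: part (iii) is argued exactly as in the text (the subspaces $y\vee u$ form a complete set of $\sim$-class representatives in $\tilde{P}_{i,j,1}$, spanning follows from (i) together with \eqref{apply central primitive idempotent}, and linear independence follows from the disjoint supports), while for (i) and (ii) the paper simply cites \cite[Theorem 2.5\,(i), (iii)]{Srinivasan2014EJC} and your supplied arguments (the identity $e_{\chi}g=\chi(g)e_{\chi}$, and the stabilizer computation $G_w=\{g\in G:(g-\operatorname{id})v\in H\cap w\}$ combined with Lemma \ref{projection}) are the standard proofs of those cited facts.
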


\begin{proof}
(i), (ii): See \cite[Theorem 2.5\,(i), (iii)]{Srinivasan2014EJC}.

(iii): Note that the subspaces $y\vee u$ $(y\in\tilde{P}_{i,j,0})$ form a complete set of representatives of the equivalence classes in $\tilde{P}_{i,j,1}$.
From (i) above and \eqref{apply central primitive idempotent}, it follows that these vectors span $M(\chi)_{i,j}$.
Moreover, they have mutually disjoint supports, and hence are linearly independent.
\end{proof}

For any subspaces $x'$ and $H'$ of $\mathbb{F}_q^{a+b+1}$ with $x'\subset H'$, we let $P(x',H')$ denote the set of all subspaces of $H'$, to which we attach the matrices $\mathcal{L}_1,\mathcal{L}_2,\mathcal{R}_1,\mathcal{R}_2,\mathcal{E}_{i,j}^*\in\operatorname{Mat}_{P(x',H')}(\mathbb{C})$ with respect to the fixed subspace $x'$ as in the previous section.
(For these matrices, the underlying subspaces $x'$ and $H'$ will be clear from the context.)
We will also consider the corresponding algebra $\mathcal{H}(x',H')$ generated by these matrices.
The following extends \cite[Theorem 2.5\,(iv), (v)]{Srinivasan2014EJC}.

\begin{proposition}\label{how tilde-H is related to H}
The following hold:
\begin{enumerate}
\item The matrix $\tilde{R}_3$ gives a vector space isomorphism from $\mathbb{C}P(x,H)(=\mathbb{C}\tilde{P}_0)$ to $M(1_G)$, where we have, on $\mathbb{C}P(x,H)$,
\begin{gather*}
	\tilde{\mathcal{L}}_1\tilde{\mathcal{R}}_3=\tilde{\mathcal{R}}_3\mathcal{L}_1, \qquad \tilde{\mathcal{L}}_2\tilde{\mathcal{R}}_3=\tilde{\mathcal{R}}_3\mathcal{L}_2, \qquad \tilde{\mathcal{L}}_3\tilde{\mathcal{R}}_3\mathcal{E}_{i,j}^*=q^{a+b-i-j}\mathcal{E}_{i,j}^*, \\ \tilde{\mathcal{R}}_1\tilde{\mathcal{R}}_3=q\tilde{\mathcal{R}}_3\mathcal{R}_1, \qquad \tilde{\mathcal{R}}_2\tilde{\mathcal{R}}_3=q\tilde{\mathcal{R}}_3\mathcal{R}_2, \qquad  \tilde{\mathcal{R}}_3\tilde{\mathcal{R}}_3=0,\\
	\tilde{\mathcal{E}}_{i,j,0}^*\tilde{\mathcal{R}}_3=0, \qquad \tilde{\mathcal{E}}_{i,j,1}^*\tilde{\mathcal{R}}_3=\tilde{\mathcal{R}}_3\mathcal{E}_{i,j}^*
\end{gather*}
for $0\leqslant i\leqslant a$ and $0\leqslant j\leqslant b$.
\item Let $\chi\in\widehat{G}$ with $\chi\ne 1_G$ and $\chi|_K=1_K$.
Define the linear map $\Theta_{\chi}:\mathbb{C}\tilde{P}_0\rightarrow\mathbb{C}\tilde{P}_1$ by
\begin{equation*}
	\Theta_{\chi}y=e_{\chi}(y\vee u) \qquad (y\in\tilde{P}_0).
\end{equation*}
Then there is a unique $H_{\chi}\in\tilde{P}_{a,b-1,0}$ such that for every $y\in\tilde{P}_0$, we have $\Theta_{\chi}y\ne 0$ if and only if $y\subset H_{\chi}$.
Moreover, $\Theta_{\chi}$ gives a vector space isomorphism from $\mathbb{C}P(x,H_{\chi})$ to $M(\chi)$, where we have, on $\mathbb{C}P(x,H_{\chi})$,
\begin{gather*}
	\tilde{\mathcal{L}}_1\Theta_{\chi}=q\Theta_{\chi}\mathcal{L}_1, \qquad \tilde{\mathcal{L}}_2\Theta_{\chi}=q\Theta_{\chi}\mathcal{L}_2, \qquad \tilde{\mathcal{L}}_3\Theta_{\chi}=0, \\
	\tilde{\mathcal{R}}_1\Theta_{\chi}=\Theta_{\chi}\mathcal{R}_1, \qquad \tilde{\mathcal{R}}_2\Theta_{\chi}=\Theta_{\chi}\mathcal{R}_2, \qquad \tilde{\mathcal{R}}_3\Theta_{\chi}=0, \\
	\tilde{\mathcal{E}}_{i,j,0}^*\Theta_{\chi}=\tilde{\mathcal{E}}_{i,b,0}^*\Theta_{\chi}=\tilde{\mathcal{E}}_{i,b,1}^*\Theta_{\chi}=0, \qquad \tilde{\mathcal{E}}_{i,j,1}^*\Theta_{\chi}=\Theta_{\chi}\mathcal{E}_{i,j}^*
\end{gather*}
for $0\leqslant i\leqslant a$ and $0\leqslant j\leqslant b-1$.
\item Let $\chi\in\widehat{G}$ with $\chi|_K\ne 1_K$.
Let $\Theta_{\chi}:\mathbb{C}\tilde{P}_0\rightarrow\mathbb{C}\tilde{P}_1$ be as in (ii) above.
Then there is a unique $H_{\chi}\in\tilde{P}_{a-1,b,0}$ such that for every $y\in\tilde{P}_0$, we have $\Theta_{\chi}y\ne 0$ if and only if $y\subset H_{\chi}$.
Moreover, $\Theta_{\chi}$ gives a vector space isomorphism from $\mathbb{C}P(x\cap H_{\chi},H_{\chi})$ to $M(\chi)$, where we have, on $\mathbb{C}P(x\cap H_{\chi},H_{\chi})$,
\begin{gather*}
	\tilde{\mathcal{L}}_1\Theta_{\chi}=q\Theta_{\chi}\mathcal{L}_1, \qquad \tilde{\mathcal{L}}_2\Theta_{\chi}=q\Theta_{\chi}\mathcal{L}_2, \qquad \tilde{\mathcal{L}}_3\Theta_{\chi}=0, \\
	\tilde{\mathcal{R}}_1\Theta_{\chi}=\Theta_{\chi}\mathcal{R}_1, \qquad \tilde{\mathcal{R}}_2\Theta_{\chi}=\Theta_{\chi}\mathcal{R}_2, \qquad \tilde{\mathcal{R}}_3\Theta_{\chi}=0, \\
	\tilde{\mathcal{E}}_{i,j,0}^*\Theta_{\chi}=\tilde{\mathcal{E}}_{a,j,0}^*\Theta_{\chi}=\tilde{\mathcal{E}}_{a,j,1}^*\Theta_{\chi}=0, \qquad \tilde{\mathcal{E}}_{i,j,1}^*\Theta_{\chi}=\Theta_{\chi}\mathcal{E}_{i,j}^*
\end{gather*}
for $0\leqslant i\leqslant a-1$ and $0\leqslant j\leqslant b$.
\end{enumerate}
\end{proposition}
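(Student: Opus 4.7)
The plan is to prove part (i) directly via $G$-orbit sums, and to prove parts (ii) and (iii) by establishing a bijection between non-trivial characters of $G$ and hyperplanes of $H$.

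For part (i), I first note that, for $z\in\tilde{P}_{i,j,0}$, the vector $\tilde{\mathcal{R}}_3 z$ is the sum of all $y\in\tilde{P}_1$ with $H\cap y=z$, which is exactly the $G$-orbit sum of $z\vee u$. Hence $\tilde{\mathcal{R}}_3 z$ is a nonzero scalar multiple of $e_{1_G}(z\vee u)$, and Lemma~\ref{basis}(iii) with $\chi=1_G$ tells us that these vectors form a basis of $M(1_G)$, giving the isomorphism. The commutation relations reduce to matching the coefficients of individual elements of $\tilde{P}$, which are settled by elementary incidence counts; for instance, the factor of $q$ in $\tilde{\mathcal{R}}_1\tilde{\mathcal{R}}_3=q\tilde{\mathcal{R}}_3\mathcal{R}_1$ reflects that, for $z\subset v$ with $z\in\tilde{P}_{i,j,0}$ and $v\in\tilde{P}_{i+1,j,1}$, exactly $q$ of the $q+1$ subspaces $w$ satisfying $z\subset w\subset v$ and $\dim w=i+j+1$ lie in $\tilde{P}_{i,j,1}$ (the remaining one being $H\cap v$), whereas the right-hand side is realized by the unique choice $z'=H\cap v$.

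For parts (ii) and (iii), the key structural input is that the map $\varphi\colon G\to H$, $g\mapsto g(u)-u$, is a group isomorphism (as follows from \eqref{matrix form}, since $u$ and $H$ span $\mathbb{F}_q^{a+b+1}$ and $G$ fixes $H$ pointwise); moreover $\varphi(K)=x$ and $\varphi(G_{y\vee u})=y$ for all $y\in\tilde{P}_0$. Identifying $\widehat{G}\cong\widehat{H}$ via $\varphi$, Lemma~\ref{projection} says $\Theta_\chi y\neq 0$ iff $y\subset\ker\chi$. In case (ii), $\chi\neq 1_G$ with $\chi|_K=1_K$ corresponds to a non-trivial character of $H$ vanishing on $x$, whose kernel $H_\chi$ is a hyperplane of $H$ containing $x$, placing $H_\chi\in\tilde{P}_{a,b-1,0}$; in case (iii), $\chi|_K\neq 1_K$ yields a hyperplane $H_\chi$ not containing $x$ but with $\dim(x\cap H_\chi)=a-1$, so $H_\chi\in\tilde{P}_{a-1,b,0}$. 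Lemma~\ref{basis}(iii) then identifies $\{\Theta_\chi y:y\in\tilde{P}_{i,j,0},\,y\subset H_\chi\}$ as a basis of $M(\chi)_{i,j}$; this index set is $P(x,H_\chi)_{i,j}$ in case (ii) and $P(x\cap H_\chi,H_\chi)_{i,j}$ in case (iii) (using $x\cap y=(x\cap H_\chi)\cap y$ for $y\subset H_\chi$), yielding the desired vector space isomorphism.

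The main obstacle is verifying the commutation relations in (ii) and (iii). Since every generator of $\tilde{\mathcal{H}}$ commutes with the $G$-action, it commutes with $e_\chi$, so it suffices to compute $e_\chi\tilde{\mathcal{L}}_i(y\vee u)$ and $e_\chi\tilde{\mathcal{R}}_i(y\vee u)$. The factor of $q$ in $\tilde{\mathcal{L}}_1\Theta_\chi=q\Theta_\chi\mathcal{L}_1$ (and analogously for $\tilde{\mathcal{L}}_2$) arises as follows: for each relevant hyperplane $y'$ of $y$, the operator $\tilde{\mathcal{L}}_1$ contributes the $q$ subspaces $v\subset y\vee u$ with $H\cap v=y'$; these constitute a single orbit of the quotient $G_{y\vee u}/G_{y'\vee u}$ (a group of order $q$) acting on $y'\vee u$, and since $y\subset H_\chi$ forces $\chi$ to be trivial on this quotient, summing $e_\chi v$ over these $q$ vectors gives $q\cdot e_\chi(y'\vee u)=q\Theta_\chi y'$. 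The vanishing $\tilde{\mathcal{L}}_3\Theta_\chi=0$ follows from $\tilde{\mathcal{L}}_3\,g(y\vee u)=g(y)=y$ for every $g\in G$, which gives $\tilde{\mathcal{L}}_3\Theta_\chi y=|G|^{-1}\bigl(\sum_{g\in G}\chi(g^{-1})\bigr)y=0$ by character orthogonality for $\chi\neq 1_G$, while $\tilde{\mathcal{R}}_3\Theta_\chi=0$ because $\tilde{\mathcal{R}}_3$ vanishes identically on $\mathbb{C}\tilde{P}_1$. Finally, the relations involving $\tilde{\mathcal{E}}^*_{i,j,k}$ are immediate from $y\vee u\in\tilde{P}_{i,j,1}$ when $y\in\tilde{P}_{i,j,0}$, together with the constraint $y\subset H_\chi$ on the domain.
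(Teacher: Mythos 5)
Your proof is correct, and for the key structural step of parts (ii) and (iii) it takes a genuinely different route from the paper. The paper identifies $H_{\chi}$ indirectly: it computes $\dim M(\chi)_{a,b-1}=1$ (resp.\ $\dim M(\chi)_{a-1,b}=1$) from Lemma \ref{characters}\,(ii) and \eqref{dimension=multiplicity}, extracts a unique top element with $\Theta_{\chi}H_{\chi}\ne 0$ from Lemma \ref{basis}\,(iii), and then uses the dimension count \eqref{dimensions of G-components} together with Lemma \ref{basis}\,(ii) to conclude that $\Theta_{\chi}y\ne 0$ exactly for $y\subset H_{\chi}$. You instead make the isomorphism $\varphi\colon G\to H$, $g\mapsto g(\mathbf{u})-\mathbf{u}$, explicit, observe $\varphi(K)=x$ and $\varphi(G_{y\vee u})=y$, and read off $H_{\chi}$ directly from Lemma \ref{projection} as the locus where $\chi$ vanishes; this is more transparent (it explains \emph{which} hyperplane $H_{\chi}$ is and why the two cases $\chi|_K=1_K$ and $\chi|_K\ne 1_K$ correspond to $x\subset H_{\chi}$ and $x\not\subset H_{\chi}$) at the cost of some extra bookkeeping, and it also subsumes the uniqueness claim immediately. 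Your treatment of part (i) via orbit sums ($\tilde{\mathcal{R}}_3 z$ being a nonzero multiple of $e_{1_G}(z\vee u)$, then Lemma \ref{basis}\,(iii)) likewise replaces the paper's argument via $\tilde{\mathcal{L}}_3\tilde{\mathcal{R}}_3\mathcal{E}^*_{i,j}=q^{a+b-i-j}\mathcal{E}^*_{i,j}$ plus a dimension count; both are fine. The verification of the commutation relations (the orbit of size $q$ under $G_{y\vee u}/G_{y'\vee u}$ producing the factor $q$, the character sum killing $\tilde{\mathcal{L}}_3\Theta_{\chi}$) is essentially identical to the paper's.

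One point needs a small repair: when $q=p^e$ with $e>1$, the kernel of a nontrivial character of the additive group $H\cong\mathbb{F}_q^{a+b}$ is an index-$p$ subgroup, i.e.\ an $\mathbb{F}_p$-hyperplane, and is \emph{not} an $\mathbb{F}_q$-subspace, so you cannot literally set $H_{\chi}=\ker\chi\in\tilde{P}_{a,b-1,0}$. The fix is routine: writing $\chi(v)=\psi(\operatorname{Tr}(\langle v,w\rangle))$ for a nonzero $w$, an $\mathbb{F}_q$-subspace $y$ lies in $\ker\chi$ if and only if $y\subset w^{\perp}$, so you should take $H_{\chi}$ to be the unique maximal $\mathbb{F}_q$-subspace contained in $\ker\chi$ (equivalently $w^{\perp}$), which is an $\mathbb{F}_q$-hyperplane of $H$; all of your subsequent assertions, including $\chi|_K=1_K\Leftrightarrow x\subset H_{\chi}$, then go through unchanged.
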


\begin{proof}
(i): Let $y\in\tilde{P}_{i,j,0}$.
Then there are $q^{a+b-i-j}$ elements $z\in\tilde{P}_{i,j,1}$ such that $y\subset z$, or equivalently, $H\cap z=y$.
Hence we have $\tilde{\mathcal{L}}_3\tilde{\mathcal{R}}_3y=q^{a+b-i-j}y$, from which it follows that $\tilde{\mathcal{L}}_3\tilde{\mathcal{R}}_3\mathcal{E}_{i,j}^*=q^{a+b-i-j}\mathcal{E}_{i,j}^*$.
Since $\mathbb{C}P(x,H)$ and $M(1_G)$ have the same dimension by \eqref{dimensions of G-components}, it follows that $\tilde{R}_3$ gives a vector space isomorphism from $\mathbb{C}P(x,H)$ to $M(1_G)$.
The other identities are easily verified.

(ii): By Lemma \ref{characters}\,(ii) and \eqref{dimension=multiplicity}, we have $	\dim M(\chi)_{a,b-1}=1$.
Hence it follows from Lemma \ref{basis}\,(iii) that there is a unique $H_{\chi}\in\tilde{P}_{a,b-1,0}$ such that $\Theta_{\chi}H_{\chi}\ne 0$.
Since $\mathbb{C}P(x,H_{\chi})$ and $M(\chi)$ have the same dimension by \eqref{dimensions of G-components}, it follows from Lemma \ref{basis}\,(ii) and (iii) that $\Theta_{\chi}$ gives a vector space isomorphism from $\mathbb{C}P(x,H_{\chi})$ to $M(\chi)$, and that, for every $y\in\tilde{P}_0$, we have $\Theta_{\chi}y\ne 0$ if and only if $y\subset H_{\chi}$.

Let $y\in\tilde{P}_{i,j,0}$ be such that $y\subset H_{\chi}$.
On the one hand, we have
\begin{equation*}
	\Theta_{\chi}\mathcal{R}_1y=\sum_{\substack{z\in\tilde{P}_{i+1,j,0} \\ y\subset z\subset H_{\chi}}} \!\!\Theta_{\chi}z.
\end{equation*}
On the other hand, since $\tilde{\mathcal{R}}_1$ and $e_{\chi}$ commute, we have
\begin{equation*}
	\tilde{\mathcal{R}}_1\Theta_{\chi}y=e_{\chi}\tilde{\mathcal{R}}_1(y\vee u)=e_{\chi}\!\sum_{\substack{w\in\tilde{P}_{i+1,j,1} \\ y\vee u\subset w}}\!\!\!w=e_{\chi}\!\sum_{\substack{z\in\tilde{P}_{i+1,j,0} \\ y\subset z}}\!\!\!(z\vee u)=\sum_{\substack{z\in\tilde{P}_{i+1,j,0} \\ y\subset z}}\!\!\Theta_{\chi}z.
\end{equation*}
However, in the last sum above, we have $\Theta_{\chi}z=0$ unless $z\subset H_{\chi}$.
Hence it follows that the above two vectors are equal.
This proves that $\tilde{\mathcal{R}}_1\Theta_{\chi}=\Theta_{\chi}\mathcal{R}_1$.
Likewise, we have
\begin{equation*}
	\Theta_{\chi}\mathcal{L}_1y=\sum_{\substack{z\in\tilde{P}_{i-1,j,0} \\ z\subset y}} \!\!\Theta_{\chi}z,
\end{equation*}
and also
\begin{equation*}
	\tilde{\mathcal{L}}_1\Theta_{\chi}y=e_{\chi}\tilde{\mathcal{L}}_1(y\vee u)=e_{\chi}\!\sum_{\substack{w\in\tilde{P}_{i-1,j,1} \\ w\subset y\vee u}}\!\!\!w=\sum_{\substack{z\in\tilde{P}_{i-1,j,0} \\ z\subset y}}\sum_{\substack{w\in\tilde{P}_{i-1,j,1} \\ z\subset w\subset y\vee u}}\!\!e_{\chi}w.
\end{equation*}
In the last sum above, for each $z$, there are exactly $q$ choices for $w$.
Pick any such $w$.
Then since $w\sim z\vee u$, there exists $g\in G$ such that $w=g(z\vee u)$.
However, since both $w$ and $z\vee u$ are subspaces of $y\vee u$, this $g$ must fix $y\vee u$, i.e., $g\in G_{y\vee u}$.
On the other hand, recall that $\Theta_{\chi}y\ne 0$ since $y\subset H_{\chi}$.
Hence it follows from Lemma \ref{projection} that $\chi|_{G_{y\vee u}}=1_{G_{y\vee u}}$.
In particular, we have $\chi(g)=1$, and hence
\begin{equation*}
	e_{\chi}w=e_{\chi}g(z\vee u)=e_{\chi}(z\vee u)=\Theta_{\chi}z.
\end{equation*}
Combining these comments, we have $\tilde{\mathcal{L}}_1\Theta_{\chi}y=q\Theta_{\chi}\mathcal{L}_1y$, and consequently, $\tilde{\mathcal{L}}_1\Theta_{\chi}=q\Theta_{\chi}\mathcal{L}_1$.
The identities involving $\tilde{\mathcal{L}}_2$ and $\tilde{\mathcal{R}}_2$ are proved similarly.
Let $y$ be as above.
Then, since $\tilde{\mathcal{L}}_3$ and $e_{\chi}$ commute, we have
\begin{equation*}
	\tilde{\mathcal{L}}_3\Theta_{\chi}y=e_{\chi}\tilde{\mathcal{L}}_3(y\vee u)=e_{\chi}y=\frac{1}{q^{a+b}}\sum_{g\in G}\chi(g^{-1})y=0,
\end{equation*}
which proves that $\tilde{\mathcal{L}}_3\Theta_{\chi}=0$.
The other identities are trivial.

(iii): Similar to the proof of (ii) above.
\end{proof}

Let $\tilde{\mathcal{W}}$ be an irreducible $\tilde{\mathcal{H}}$-module.
Let
\begin{align*}
	\nu &= \min\{i:(\tilde{\mathcal{E}}_{i,0,0}^*+\dots+\tilde{\mathcal{E}}_{i,b,1}^*)\tilde{\mathcal{W}}\ne 0\}, \\
	\nu' &= \max\{i:(\tilde{\mathcal{E}}_{i,0,0}^*+\dots+\tilde{\mathcal{E}}_{i,b,1}^*)\tilde{\mathcal{W}}\ne 0\}, \\
	\mu &= \min\{j:(\tilde{\mathcal{E}}_{0,j,0}^*+\dots+\tilde{\mathcal{E}}_{a,j,1}^*)\tilde{\mathcal{W}}\ne 0\}, \\
	\mu' &= \max\{j:(\tilde{\mathcal{E}}_{0,j,0}^*+\dots+\tilde{\mathcal{E}}_{a,j,1}^*)\tilde{\mathcal{W}}\ne 0\}, \\
	\tau &= \min\{k:(\tilde{\mathcal{E}}_{0,0,k}^*+\dots+\tilde{\mathcal{E}}_{a,b,k}^*)\tilde{\mathcal{W}}\ne 0\}, \\
	\tau' &= \max\{k:(\tilde{\mathcal{E}}_{0,0,k}^*+\dots+\tilde{\mathcal{E}}_{a,b,k}^*)\tilde{\mathcal{W}}\ne 0\}.
\end{align*}
We call $(\nu,\mu,\tau)$ (resp.~$(\nu',\mu',\tau')$) the \emph{lower endpoint} (resp.~\emph{upper endpoint}) of $\tilde{\mathcal{W}}$.
We let
\begin{equation*}
	\rho=a-\nu-\nu',
\end{equation*}
and call $\rho$ the \emph{index} of $\tilde{\mathcal{W}}$.
We now state the main result of this section.

\begin{theorem}\label{irreducible tilde-H-modules}
There exists an irreducible $\tilde{\mathcal{H}}$-module with lower endpoint $(\nu,\mu,\tau)$, upper endpoint $(\nu',\mu',\tau')$, and index $\rho$ if and only if
\begin{gather*}
	0\leqslant \nu\leqslant a, \qquad 0\leqslant \mu\leqslant b, \qquad 0\leqslant \tau \leqslant 1, \qquad \tau'=1, \\
	\qquad \max\{0,2\mu-b+\tau\} \leqslant \rho =\mu+\mu'+\tau-b \leqslant \min\{a-2\nu,\mu+\tau\}.
\end{gather*}
Let $\tilde{\mathcal{W}}$ be an irreducible $\tilde{\mathcal{H}}$-module with lower endpoint $(\nu,\mu,\tau)$ and index $\rho$.
Then the isomorphism class of $\tilde{\mathcal{W}}$ is determined by $\nu,\mu,\tau$, and $\rho$, and the following hold:
\begin{enumerate}
\item If $\tau=0$, then $\tilde{\mathcal{W}}$ has a basis $w_{i,j,k}$ $(\nu\leqslant i\leqslant a-\nu-\rho, \ \mu\leqslant j\leqslant b-\mu+\rho, \ 0\leqslant k\leqslant 1)$ such that $w_{i,j,k}\in\tilde{\mathcal{E}}_{i,j,k}^*\tilde{\mathcal{W}}$, and
\begin{align*}
	\tilde{\mathcal{L}}_1w_{i,j,k}&=q^{\nu+j}\gauss{a-\nu-\rho-i+1}{1}\gauss{i-\nu}{1}w_{i-1,j,k}, & \tilde{\mathcal{R}}_1w_{i,j,k}&=q^kw_{i+1,j,k}, \\
	\tilde{\mathcal{L}}_2w_{i,j,k}&=q^{a-\nu+\mu-\rho}\gauss{b-\mu+\rho-j+1}{1}\gauss{j-\mu}{1}w_{i,j-1,k}, & \tilde{\mathcal{R}}_2w_{i,j,k}&=q^{\nu-i+k}w_{i,j+1,k}, \\
	\tilde{\mathcal{L}}_3w_{i,j,k}&=q^{a+b-i-j}w_{i,j,k-1}, & \tilde{\mathcal{R}}_3w_{i,j,k}&=w_{i,j,k+1}
\end{align*}
for all $i,j,k$, where we set $w_{i,j,k}:=0$ if $(i,j,k)$ is outside the above range.
Moreover, in this case, the multiplicity $m_{\nu,\mu,0,\rho}$ of $\tilde{\mathcal{W}}$ in $\mathbb{C}\tilde{P}$ is given by
\begin{align*}
	m_{\nu,\mu,0,\rho}&=\frac{ (-1)^{\rho} (q)_a (q)_b (1-q^{a-2\nu-\rho+1}) (1-q^{b-2\mu+\rho+1}) q^{\nu+\mu-\rho+\binom{\rho}{2}} }{ (q)_{a-\nu-\rho+1} (q)_{b-\mu+1} (q)_{\nu} (q)_{\mu-\rho} (q)_{\rho}}.
\end{align*}
\item If $\tau=1$, then $\tilde{\mathcal{W}}$ has a basis $w_{i,j,1}$ $(\nu\leqslant i\leqslant a-\nu-\rho, \ \mu\leqslant j\leqslant b-\mu+\rho-1)$ such that $w_{i,j,1}\in\tilde{\mathcal{E}}_{i,j,1}^*\tilde{\mathcal{W}}$, and
\begin{align*}
	\tilde{\mathcal{L}}_1w_{i,j,1}&=q^{\nu+j+1}\gauss{a-\nu-\rho-i+1}{1}\gauss{i-\nu}{1}w_{i-1,j,1}, & \tilde{\mathcal{R}}_1w_{i,j,1}&=w_{i+1,j,1}, \\
	\tilde{\mathcal{L}}_2w_{i,j,1}&=q^{a-\nu+\mu-\rho+1}\gauss{b-\mu+\rho-j}{1}\gauss{j-\mu}{1}w_{i,j-1,1}, & \tilde{\mathcal{R}}_2w_{i,j,1}&=q^{\nu-i}w_{i,j+1,1}, \\
	\tilde{\mathcal{L}}_3w_{i,j,1}&=0, & \tilde{\mathcal{R}}_3w_{i,j,1}&=0
\end{align*}
for all $i,j$, where we set $w_{i,j,1}:=0$ if $(i,j)$ is outside the above range.
Moreover, in this case, the multiplicity $m_{\nu,\mu,1,\rho}$ of $\tilde{\mathcal{W}}$ in $\mathbb{C}\tilde{P}$ is given by
\begin{align*}
	m_{\nu,\mu,1,\rho}&=\frac{ (-1)^{\rho+1} (q)_a (q)_b (1-q^{a-2\nu-\rho+1}) (1-q^{b-2\mu+\rho}) q^{\nu+\mu-\rho+\binom{\rho}{2}} }{ (q)_{a-\nu-\rho+1} (q)_{b-\mu+1} (q)_{\nu} (q)_{\mu-\rho+1} (q)_{\rho} } \\
	& \qquad \times (q^{b+2}-q^{b-\mu+1}-q^{\mu-\rho+1}+1).
\end{align*}
\end{enumerate}
\end{theorem}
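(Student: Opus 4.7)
The plan is to transfer the irreducible $\mathcal{H}$-modules of Theorem \ref{irreducible H-modules} into $\mathbb{C}\tilde{P}$ via Proposition \ref{how tilde-H is related to H}. Since the generators of $\tilde{\mathcal{H}}$ commute with $G$ and $G$ acts trivially on $\tilde{P}_0$, we have the $\tilde{\mathcal{H}}$-module decomposition
\[
\mathbb{C}\tilde{P} \;=\; \bigl(\mathbb{C}\tilde{P}_0 \oplus M(1_G)\bigr) \;\oplus\; \bigoplus_{\chi\in\widehat{G}\setminus\{1_G\}} M(\chi),
\]
and by semisimplicity every irreducible $\tilde{\mathcal{H}}$-module appears in one of these summands. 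It therefore suffices to describe the irreducibles in each.

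For the first summand, identify $\mathbb{C}\tilde{P}_0$ with $\mathbb{C}P(x,H)$. Given an irreducible $\mathcal{H}(x,H)$-module $\mathcal{W}\subseteq\mathbb{C}P(x,H)$ with basis $\{w_{i,j}\}$ as in Theorem \ref{irreducible H-modules}, set $w_{i,j,0}:=w_{i,j}$ and $w_{i,j,1}:=\tilde{\mathcal{R}}_3 w_{i,j}$. The identities of Proposition \ref{how tilde-H is related to H}(i) translate the $\mathcal{H}$-action on $\mathcal{W}$ directly into the $\tau=0$ formulas of the theorem; for instance, $\tilde{\mathcal{R}}_1 w_{i,j,1}=\tilde{\mathcal{R}}_1\tilde{\mathcal{R}}_3 w_{i,j}=q\tilde{\mathcal{R}}_3\mathcal{R}_1 w_{i,j}=q\,w_{i+1,j,1}$, which is how the factor $q^k$ arises. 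Because $\tilde{\mathcal{R}}_3$ is a vector-space isomorphism from $\mathbb{C}\tilde{P}_0$ onto $M(1_G)$, the module $\tilde{\mathcal{W}}:=\mathcal{W}+\tilde{\mathcal{R}}_3\mathcal{W}$ is irreducible (any proper $\tilde{\mathcal{H}}$-submodule would restrict to a proper $\mathcal{H}(x,H)$-submodule of $\mathcal{W}$), and $m_{\nu,\mu,0,\rho}$ equals the multiplicity of $\mathcal{W}$ in $\mathbb{C}P(x,H)$, yielding the stated formula.

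For each $\chi\ne 1_G$, Proposition \ref{how tilde-H is related to H}(ii)--(iii) provides a vector-space isomorphism $\Theta_\chi$ from $\mathbb{C}P(x,H_\chi)$ (Case B: $\chi|_K=1_K$, with $q^b-1$ such characters) or $\mathbb{C}P(x\cap H_\chi,H_\chi)$ (Case C: $\chi|_K\ne 1_K$, with $q^{a+b}-q^b$ such characters) onto $M(\chi)$, intertwining the $\mathcal{H}$-action with the $\tilde{\mathcal{H}}$-action up to factors of $q$ on the raising matrices and killing $\tilde{\mathcal{L}}_3,\tilde{\mathcal{R}}_3$. Transferring an irreducible $\mathcal{H}$-module through $\Theta_\chi$ and setting $w_{i,j,1}:=\Theta_\chi w_{i,j}$ yields an irreducible $\tilde{\mathcal{H}}$-module with $\tau=\tau'=1$. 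A direct comparison of endpoints shows that the source has lower endpoint $(\nu,\mu)$ and index $\rho$ in Case B (ambient $(a,b-1)$) and index $\rho-1$ in Case C (ambient $(a-1,b)$, so $\rho_{\mathcal{H}}=(a-1)-\nu-\nu'=\rho-1$); substituting these into the formulas of Theorem \ref{irreducible H-modules} and absorbing the extra $q$ factors produces the same $\tau=1$ formulas in both cases, so the two cases yield isomorphic $\tilde{\mathcal{H}}$-modules. The existence conditions in the theorem then arise as the union of the admissible ranges of Theorem \ref{irreducible H-modules} applied to $(a,b)$, $(a,b-1)$, and $(a-1,b)$, and the multiplicity is
\[
m_{\nu,\mu,1,\rho} \;=\; (q^b-1)\,m^{(a,b-1)}_{\nu,\mu,\rho} \;+\; (q^{a+b}-q^b)\,m^{(a-1,b)}_{\nu,\mu,\rho-1},
\]
where a summand is understood to be zero if its source parameters are inadmissible.

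The main obstacle will be the concluding $q$-Pochhammer simplification bringing the above sum into the compact closed form of the theorem, which requires the identities $(q)_n=(q)_{n-1}(1-q^n)$ and $\binom{\rho-1}{2}=\binom{\rho}{2}-\rho+1$, as well as the cancellation that produces the extra polynomial factor $q^{b+2}-q^{b-\mu+1}-q^{\mu-\rho+1}+1$. Every other step---the $G$-decomposition of $\mathbb{C}\tilde{P}$, the verification of the action formulas, the parameter correspondence between Cases B and C, and irreducibility---reduces to a direct application of Proposition \ref{how tilde-H is related to H} together with Theorem \ref{irreducible H-modules}.
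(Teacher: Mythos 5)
Your proposal is correct and follows essentially the same route as the paper: the decomposition $\mathbb{C}\tilde{P}=(\mathbb{C}\tilde{P}_0\oplus M(1_G))\oplus\bigoplus_{\chi\ne 1_G}M(\chi)$, transfer of irreducible $\mathcal{H}$-modules via $\tilde{\mathcal{R}}_3$ and $\Theta_\chi$ using Proposition \ref{how tilde-H is related to H}, the index shift $\rho\mapsto\rho-1$ in the $\chi|_K\ne 1_K$ case, and the multiplicity obtained by weighting the two character classes. The paper proves irreducibility of $\tilde{\mathcal{W}}$ in the $\tau=0$ case by a slightly different (generation-from-$w_{\nu,\mu,0}$) argument, but this is an inessential variation.
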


\begin{proof}
First, recall that $\mathbb{C}\tilde{P}_0\bigoplus M(1_G)$ is an $\tilde{\mathcal{H}}$-module.
Recall also the algebra $\mathcal{H}(x,H)$ acting on $\mathbb{C}P(x,H)=\mathbb{C}\tilde{P}_0$.
Let $\mathcal{W}$ be an irreducible $\mathcal{H}(x,H)$-module in $\mathbb{C}P(x,H)$ with lower endpoint $(\nu,\mu)$ and index $\rho$, and let the basis vectors $w_{i,j}$ be as in Theorem \ref{irreducible H-modules}.
For $\nu\leqslant i\leqslant a-\nu-\rho$ and $\mu\leqslant j\leqslant b-\mu+\rho$, let $w_{i,j,0}=w_{i,j}$ and  $w_{i,j,1}=\tilde{\mathcal{R}}_3w_{i,j}$.
Then from Proposition \ref{how tilde-H is related to H}\,(i) it follows that the $w_{i,j,k}$ are non-zero and are linearly independent, and that they form a basis of an $\tilde{\mathcal{H}}$-module, which we denote by $\tilde{\mathcal{W}}$.
It is also immediate to see that the actions of the generators on the $w_{i,j,k}$ are as in (i).
We now claim that $\tilde{\mathcal{W}}$ is irreducible.
Since $\tilde{\mathcal{H}}$ is semisimple, $\tilde{\mathcal{W}}$ is a direct sum of irreducible $\tilde{\mathcal{H}}$-submodules.
Since $\tilde{\mathcal{E}}^*_{\nu,\mu,0}\tilde{\mathcal{W}}=\operatorname{span}\{w_{\nu,\mu,0}\}\ne 0$, there is an irreducible $\tilde{\mathcal{H}}$-submodule $\tilde{\mathcal{U}}$ of $\tilde{\mathcal{W}}$ such that $\tilde{\mathcal{E}}^*_{\nu,\mu,0}\,\tilde{\mathcal{U}}\ne 0$.
Then we have $w_{\nu,\mu,0}\in\tilde{\mathcal{U}}$, and hence $\tilde{\mathcal{W}}=\tilde{\mathcal{H}}w_{\nu,\mu,0}\subset \tilde{\mathcal{U}}$, i.e., $\tilde{\mathcal{W}}=\tilde{\mathcal{U}}$.
It follows that $\tilde{\mathcal{W}}$ is irreducible.
We note that $\tilde{\mathcal{W}}$ has lower endpoint $(\nu,\mu,0)$ and index $\rho$.

Second, let $\chi\in\widehat{G}$ with $\chi\ne 1_G$ and $\chi|_K=1_K$, and recall the $\tilde{\mathcal{H}}$-module $M(\chi)$.
In this case, we consider the algebra $\mathcal{H}(x,H_{\chi})$, where $H_{\chi}\in\tilde{P}_{a,b-1,0}$ is from Proposition \ref{how tilde-H is related to H}\,(ii).
We argue as above by letting $\tilde{\mathcal{W}}$ be the linear span of the vectors $w_{i,j,1}=\Theta_{\chi}w_{i,j}$ $(\nu\leqslant i\leqslant a-\nu-\rho,\ \mu\leqslant j\leqslant b-\mu+\rho-1)$.
The actions of the generators on the $w_{i,j,1}$ are as in (ii).
The irreducible $\tilde{\mathcal{H}}$-module $\tilde{\mathcal{W}}$ has lower endpoint $(\nu,\mu,1)$ and index $\rho$.

Third, let $\chi\in\widehat{G}$ with $\chi|_K\ne 1_K$.
In this case, we consider the algebra $\mathcal{H}(x\cap H_{\chi},H_{\chi})$, where $H_{\chi}\in\tilde{P}_{a-1,b,0}$ is from Proposition \ref{how tilde-H is related to H}\,(iii).
We again argue as above, but we start from an irreducible $\mathcal{H}(x\cap H_{\chi},H_{\chi})$-module $\mathcal{W}$ with lower endpoint $(\nu,\mu)$ and index $\rho-1$.
Then the actions of the generators on the $w_{i,j,1}$ are again as in (ii), and the irreducible $\tilde{\mathcal{H}}$-module $\tilde{\mathcal{W}}$ has lower endpoint $(\nu,\mu,1)$ and index $\rho$.

The formulas for the multiplicities follow from Theorem \ref{irreducible H-modules}.
Note that we obtain isomorphic irreducible $\tilde{\mathcal{H}}$-modules from the second and the third cases above, so that the multiplicity in (ii) is computed by adding two terms, each multiplied by the number of respective characters, and then simplifying.
The other statements are also verified using Theorem \ref{irreducible H-modules}.
\end{proof}

%%%%%%%%%%%%%%%%%%%%
%%%%%%%%%%%%%%%%%%%%
\section{The Terwilliger algebra \texorpdfstring{$T=T(x)$}{T=T(x)}}
\label{sec: irreducible T-modules}

We now turn to the discussions on the Terwilliger algebra $T=T(x)$ of the twisted Grassmann graph $\tilde{J}_q(2D+1,D)$, where we choose the base vertex $x$ from $X''$.
We mentioned in Introduction that $\tilde{J}_q(2D+1,D)$ has the same intersection array as the Grassmann graph $J_q(2D+1,D)$.
In particular, it is an example of a $Q$-\emph{polynomial} distance-regular graph with diameter $D$.
(For the background information on distance-regular graphs, we refer to \cite{BI1984B,BCN1989B,DKT2016EJC,Godsil1993B}.)
The eigenvalues of $J_q(2D+1,D)$, and hence of $\tilde{J}_q(2D+1,D)$, are given in \cite[Theorem 9.3.3]{BCN1989B} as follows:
\begin{equation*}
	\theta_i=q\gauss{D}{1}\gauss{D+1}{1}-\gauss{i}{1}\gauss{2D-i+2}{1} \qquad (0\leqslant i\leqslant D).
\end{equation*}
Recall the diagonal matrices $E_i^*=E_i^*(x)\in\operatorname{Mat}_X(\mathbb{C})$ $(0\leqslant i\leqslant D)$.
We note that two vertices $y$ and $z$ are at distance $i$ if and only if (cf.~\eqref{adjacency})
\begin{equation}\label{distance i}
	\dim y+\dim z-2\dim y\cap z=2i.
\end{equation}
For $0\leqslant i\leqslant D$, let $E_i\in\operatorname{Mat}_X(\mathbb{C})$ be the orthogonal projection onto the eigenspace of the adjacency matrix $A$ with eigenvalue $\theta_i$.

Let $W$ be an irreducible $T$-module.
Define the \emph{support} and the \emph{dual support} of $W$ by
\begin{equation*}
	W_s=\{i:E_i^*W\ne 0\}, \qquad W_s^*=\{i:E_iW\ne 0\},
\end{equation*}
respectively.
We then define the \emph{endpoint}, \emph{dual endpoint}, \emph{diameter}, and the \emph{dual diameter} of $W$ by
\begin{equation*}
	\epsilon=\min W_s, \qquad \epsilon^*=\min W_s^*, \qquad d=|W_s|-1, \qquad d^*=|W_s^*|-1,
\end{equation*}
respectively.
It is known that $d=d^*$, and that
\begin{equation*}
	W_s=\{\epsilon,\epsilon+1,\dots,\epsilon+d\}, \qquad W_s^*=\{\epsilon^*,\epsilon^*+1,\dots,\epsilon^*+d\}.
\end{equation*}
See \cite[Corollary 3.3]{Pascasio2002EJC} and \cite[Lemmas 3.9, 3.12]{Terwilliger1992JAC}.
Moreover, we have
\begin{gather}
	\epsilon\geqslant 0, \qquad \epsilon^*\geqslant 0, \qquad d\geqslant 0, \label{trivial condition 1} \\
	\epsilon+d\leqslant D, \qquad \epsilon^*+d\leqslant D, \label{trivial condition 2} \\
	2\epsilon+d\geqslant D, \qquad 2\epsilon^*+d\geqslant D. \label{trivial condition 3}
\end{gather}
Here, \eqref{trivial condition 1} and \eqref{trivial condition 2} are clear, and \eqref{trivial condition 3} is given in \cite[Lemmas 5.1, 7.1]{Caughman1999DM}.
In describing the irreducible $T$-modules, we thus consider the following set:
\begin{equation}\label{Omega}
	\Omega=\Omega_D=\{(\epsilon,\epsilon^*,d)\in\mathbb{Z}^3 : \text{\eqref{trivial condition 1}, \eqref{trivial condition 2}, \eqref{trivial condition 3} hold}\}.
\end{equation}
Observe that the first two inequalities in \eqref{trivial condition 1} are consequences of \eqref{trivial condition 2} and \eqref{trivial condition 3}, so that we may replace \eqref{trivial condition 1} in the definition of $\Omega$ by the following:
\begin{equation*}
	d\geqslant 0. \tag*{(\ref{trivial condition 1})'}
\end{equation*}

It will also be important to consider the following $(d+1)\times (d+1)$ matrix having four free parameters (besides $q$ and $d$):
\begin{equation}\label{standard form}
	\mathsf{A}(q,d;h,r,s,\lambda_0)=\begin{pmatrix} \mathsf{a}_0 & \mathsf{b}_0 &&&& \bm{0} \\ \mathsf{c}_1 & \mathsf{a}_1 & \mathsf{b}_1 &&& \\ & \mathsf{c}_2 & \mathsf{a}_2 & \cdot && \\ && \cdot & \cdot & \cdot & \\ &&& \cdot & \cdot & \mathsf{b}_{d-1} \\ \bm{0} &&&& \mathsf{c}_d & \mathsf{a}_d \end{pmatrix},
\end{equation}
where
\begin{gather*}
	\mathsf{a}_i+\mathsf{b}_i+\mathsf{c}_i=\lambda_0 \qquad (0\leqslant i\leqslant d), \\
	\mathsf{b}_i=h(1-q^{i-d})(1-rq^{i+1}), \qquad \mathsf{c}_i=hsq(1-q^i)(1-rq^{i-d-1}/s) \qquad (0\leqslant i\leqslant d),
\end{gather*}
and where we assume that $h,r,s\ne 0$, that $rq^i,sq^i/r\ne 1$ $(1\leqslant i\leqslant d)$, and that $sq^i\ne 1$ $(2\leqslant i\leqslant 2d)$.
We note that $\mathsf{b}_{i-1}\mathsf{c}_i\ne 0$ $(1\leqslant i\leqslant d)$.
This matrix is the standardized form of one of the operators of a \emph{Leonard system} of dual $q$-Hahn type; cf.~\cite[Example 5.5]{Terwilliger2005DCC}.
With the notation of \cite[Section 2]{Terwilliger1992JAC}, this also corresponds to Case (I) with $r_1=s^*=0$.
See also \cite[Theorem 17.7]{Terwilliger2004LAA}.
The eigenvalues of the matrix \eqref{standard form} are given by
\begin{equation}\label{eigenvalues}
	\lambda_i=\lambda_0+h(1-q^i)(1-sq^{i+1})q^{-i} \qquad (0\leqslant i\leqslant d).
\end{equation}
In particular, if we fix $h,s$, and $\lambda_0$, then the matrices \eqref{standard form} with distinct values of $r$ are all similar.
On the other hand, we note that

\begin{lemma}\label{uniqueness of r}
Suppose that $d>0$.
Then there exists a $(d+1)\times (d+1)$ invertible diagonal matrix $\mathsf{S}$ such that
\begin{equation*}
	\mathsf{S}^{-1}\mathsf{A}(q,d;h,r,s,\lambda_0)\mathsf{S}=\mathsf{A}(q,d;h,r',s,\lambda_0)
\end{equation*}
if and only if $r=r'$, in which case $\mathsf{S}$ is a non-zero scalar matrix.
\end{lemma}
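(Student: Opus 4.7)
The plan is to exploit the tridiagonal structure of $\mathsf{A}(q,d;h,r,s,\lambda_0)$ together with the standard observation that conjugation by an invertible diagonal matrix $\mathsf{S}=\operatorname{diag}(s_0,\dots,s_d)$ leaves the diagonal entries unchanged and only scales the superdiagonal entries $\mathsf{b}_i$ by the factor $s_{i+1}/s_i$ (and the subdiagonal entries $\mathsf{c}_{i+1}$ by $s_i/s_{i+1}$). The converse implication of the lemma is trivial (any non-zero scalar matrix conjugates $\mathsf{A}$ to itself), so the substantive content is the forward direction: deriving $r=r'$ and then the scalarity of $\mathsf{S}$.

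First I would compare the $(0,0)$-entries of the two matrices. Since $\mathsf{c}_0$ carries the factor $1-q^0=0$, the diagonal entry $\mathsf{a}_0=\lambda_0-\mathsf{b}_0-\mathsf{c}_0$ equals $\lambda_0-h(1-q^{-d})(1-rq)$, and analogously $\mathsf{a}_0'=\lambda_0-h(1-q^{-d})(1-r'q)$ for the target matrix. The equality $\mathsf{a}_0=\mathsf{a}_0'$ forced by the diagonal similarity therefore reduces to
\begin{equation*}
	h q(1-q^{-d})(r-r')=0.
\end{equation*}
Here the hypothesis $d>0$ is used precisely to secure $1-q^{-d}\ne 0$, and together with $h\ne 0$ this yields $r=r'$.

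With $r=r'$ in hand, the two matrices coincide and the conjugation relation becomes $\mathsf{S}\mathsf{A}=\mathsf{A}\mathsf{S}$. Reading off the $(i,i+1)$-entries for $0\le i\le d-1$ gives $s_i\mathsf{b}_i=s_{i+1}\mathsf{b}_i$, and the standing assumption $\mathsf{b}_{i-1}\mathsf{c}_i\ne 0$ for $1\le i\le d$ guarantees $\mathsf{b}_i\ne 0$ throughout this range. Hence $s_0=s_1=\cdots=s_d$, so $\mathsf{S}$ is a non-zero scalar matrix. I do not expect any real obstacle: the argument is a one-entry diagonal comparison followed by a standard commutation check, and the role of $d>0$ is merely to make the $r$-dependence of $\mathsf{a}_0$ nontrivial; at $d=0$ the matrix collapses to $[\lambda_0]$ and the statement plainly fails.
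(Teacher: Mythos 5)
Your proof is correct and follows essentially the same route as the paper's: the paper simply notes that the diagonal entries $\mathsf{a}_i$ are (nontrivially) linear in $r$ when $d>0$ and that diagonal conjugation preserves them, which is exactly your $(0,0)$-entry comparison made explicit. Your commutation check for the scalarity of $\mathsf{S}$ is the standard completion that the paper leaves implicit.
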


\begin{proof}
Observe that the $\mathsf{a}_i$ are linear in $r$ provided that $d>0$.
Since conjugation by $\mathsf{S}$ does not change the diagonal entries, the result follows.
\end{proof}

\noindent
If $d=0$ then the matrix \eqref{standard form} is in fact independent of $h,r$, and $s$, but we will still include these for convenience of the descriptions.

For the rest of this paper, we retain the notation of Section \ref{sec: extension of H} with
\begin{equation*}
	a=D-1, \qquad b=D+1,
\end{equation*}
where we take the base vertex $x\in X''$ as the fixed subspace $x\in\tilde{P}$ in Section \ref{sec: extension of H}, and similarly for the hyperplane $H$.
Then we have
\begin{equation*}
	X'=\bigsqcup_{\ell=1}^D \tilde{P}_{D-\ell,\ell,1}, \qquad X''=\bigsqcup_{\ell=0}^{D-1} \tilde{P}_{D-\ell-1,\ell,0}.
\end{equation*}
Set
\begin{equation*}
	\tilde{\mathcal{E}}^{*\prime}=\sum_{\ell=1}^D\tilde{\mathcal{E}}^*_{D-\ell,\ell,1}, \qquad \tilde{\mathcal{E}}^{*\prime\prime}=\sum_{\ell=0}^{D-1}\tilde{\mathcal{E}}^*_{D-\ell-1,\ell,0}, \qquad \tilde{\mathcal{E}}^*=\tilde{\mathcal{E}}^{*\prime}+\tilde{\mathcal{E}}^{*\prime\prime}.
\end{equation*}
Note that $\tilde{\mathcal{E}}^*$ is the orthogonal projection onto $\mathbb{C}X\subset\mathbb{C}\tilde{P}$.
We also identify $\operatorname{Mat}_X(\mathbb{C})$ with $\tilde{\mathcal{E}}^*\operatorname{Mat}_{\tilde{P}}(\mathbb{C})\tilde{\mathcal{E}}^*$ in the obvious manner.
With this notation, the adjacency matrix $A\in\operatorname{Mat}_X(\mathbb{C})$ of $\tilde{J}_q(2D+1,D)$ is written as
\begin{equation*}
	A=\tilde{\mathcal{E}}^{*\prime}A\tilde{\mathcal{E}}^{*\prime}+\tilde{\mathcal{E}}^{*\prime}A\tilde{\mathcal{E}}^{*\prime\prime}+\tilde{\mathcal{E}}^{*\prime\prime}A\tilde{\mathcal{E}}^{*\prime}+\tilde{\mathcal{E}}^{*\prime\prime}A\tilde{\mathcal{E}}^{*\prime\prime},
\end{equation*}
and direct computations show that
\begin{align}
	\tilde{\mathcal{E}}^{*\prime}A\tilde{\mathcal{E}}^{*\prime}&=\tilde{\mathcal{E}}^{*\prime}(\tilde{\mathcal{R}}_1+\tilde{\mathcal{R}}_2+\tilde{\mathcal{R}}_3)(\tilde{\mathcal{L}}_1+\tilde{\mathcal{L}}_2+\tilde{\mathcal{L}}_3)\tilde{\mathcal{E}}^{*\prime}-\gauss{D+1}{1}\tilde{\mathcal{E}}^{*\prime}, \label{A11} \\
	\tilde{\mathcal{E}}^{*\prime}A\tilde{\mathcal{E}}^{*\prime\prime}&=\tilde{\mathcal{E}}^{*\prime}\tilde{\mathcal{R}}_3(\tilde{\mathcal{R}}_1+\tilde{\mathcal{R}}_2)\tilde{\mathcal{E}}^{*\prime\prime}, \\
	\tilde{\mathcal{E}}^{*\prime\prime}A\tilde{\mathcal{E}}^{*\prime}&=\tilde{\mathcal{E}}^{*\prime\prime}(\tilde{\mathcal{L}}_1+\tilde{\mathcal{L}}_2)\tilde{\mathcal{L}}_3\tilde{\mathcal{E}}^{*\prime}, \\
	\tilde{\mathcal{E}}^{*\prime\prime}A\tilde{\mathcal{E}}^{*\prime\prime}&=\tilde{\mathcal{E}}^{*\prime\prime}(\tilde{\mathcal{R}}_1+\tilde{\mathcal{R}}_2)(\tilde{\mathcal{L}}_1+\tilde{\mathcal{L}}_2)\tilde{\mathcal{E}}^{*\prime\prime}-\gauss{D-1}{1}\tilde{\mathcal{E}}^{*\prime\prime}. \label{A22}
\end{align}
Moreover, we have (cf.~\eqref{distance i})
\begin{equation}\label{Ei*}
	E_i^*=\tilde{\mathcal{E}}^*_{D-i,i,1}+\tilde{\mathcal{E}}^*_{D-i-1,i,0} \qquad (0\leqslant i\leqslant D),
\end{equation}
where $\tilde{\mathcal{E}}^*_{D,0,1}=\tilde{\mathcal{E}}^*_{-1,D,0}:=0$.
It follows that $T$ is a subalgebra of $\tilde{\mathcal{E}}^*\tilde{\mathcal{H}}\tilde{\mathcal{E}}^*$.

We will now find all the irreducible $T$-modules in $\mathbb{C}X$.
First, let $\tilde{\mathcal{W}}$ be an irreducible $\tilde{\mathcal{H}}$-module in $\mathbb{C}\tilde{P}$ with lower endpoint $(\nu,\mu,0)$ and index $\rho$, where we recall from Theorem \ref{irreducible tilde-H-modules} that $\nu,\mu$, and $\rho$ satisfy
\begin{gather}
	0\leqslant \nu\leqslant D-1, \qquad 0\leqslant \mu\leqslant D+1, \label{range1} \\
	\max\{0,2\mu-D-1\}\leqslant \rho\leqslant \min\{D-2\nu-1,\mu\}. \label{range2}
\end{gather}
In particular, $\nu+\mu\leqslant D$.
Let the basis vectors $w_{i,j,k}$ of $\tilde{\mathcal{W}}$ be as in Theorem \ref{irreducible tilde-H-modules}\,(i).
Note that
\begin{equation*}
	\tilde{\mathcal{E}}^*\tilde{\mathcal{W}}=\tilde{\mathcal{E}}^{*\prime}\tilde{\mathcal{W}}\bigoplus\tilde{\mathcal{E}}^{*\prime\prime}\tilde{\mathcal{W}},
\end{equation*}
and that
\begin{align*}
	\tilde{\mathcal{E}}^{*\prime}\tilde{\mathcal{W}} &= \operatorname{span}\bigl\{ w_{D-i,i,1}:\max\{\nu+\rho+1,\mu\}\leqslant i\leqslant \min\{D-\nu,D-\mu+\rho+1\}\bigr\}, \\
	\tilde{\mathcal{E}}^{*\prime\prime}\tilde{\mathcal{W}} &= \operatorname{span}\bigl\{ w_{D-i-1,i,0}:\max\{\nu+\rho,\mu\}\leqslant i\leqslant\min\{D-\nu-1,D-\mu+\rho+1\}\bigr\},
\end{align*}
where we always have $\tilde{\mathcal{E}}^{*\prime}\tilde{\mathcal{W}}\ne 0$, whereas  $\tilde{\mathcal{E}}^{*\prime\prime}\tilde{\mathcal{W}}\ne 0$ precisely when $\nu+\mu<D$.

Let
\begin{equation*}
	w_i=w_{D-i,i,1}+q^{\nu+i}\gauss{D-\nu-i}{1}w_{D-i-1,i,0},
\end{equation*}
for $\max\{\nu+\rho,\mu\}\leqslant i\leqslant \min\{D-\nu,D-\mu+\rho+1\}$, and let
\begin{equation*}
	\overline{w}_i=w_{D-i,i,1}-q^D\gauss{i-\nu-\rho}{1}w_{D-i-1,i,0},
\end{equation*}
for $\max\{\nu+\rho+1,\mu\}\leqslant i\leqslant \min\{D-\nu-1,D-\mu+\rho+1\}$,
where we set $w_{i,j,k}:=0$ whenever $(i,j,k)$ is outside the parameter range.
Observe by \eqref{Ei*} that $w_i,\overline{w}_i\in E_i^*\tilde{\mathcal{W}}$.
We define the subspaces $W_1$ and $W_2$ of $\tilde{\mathcal{E}}^*\tilde{\mathcal{W}}$ by
\begin{align*}
	W_1 &= \operatorname{span}\bigl\{ w_i:\max\{\nu+\rho,\mu\}\leqslant i\leqslant \min\{D-\nu,D-\mu+\rho+1\}\bigr\}, \\
	W_2 &= \operatorname{span}\bigl\{ \overline{w}_i:\max\{\nu+\rho+1,\mu\}\leqslant i\leqslant \min\{D-\nu-1,D-\mu+\rho+1\}\bigr\},
\end{align*}
where we always have $W_1\ne 0$, whereas $W_2\ne 0$ precisely when $\rho<D-2\nu-1$.
Then we have
\begin{equation*}
	\tilde{\mathcal{E}}^*\tilde{\mathcal{W}}=W_1\bigoplus W_2.
\end{equation*}
Moreover, it follows from Theorem \ref{irreducible tilde-H-modules}\,(i) and \eqref{A11}--\eqref{A22} that
\begin{align}
	Aw_i =& \left\{ q^{\nu+i}\gauss{D-\nu-i}{1}\gauss{i-\nu-\rho+1}{1} -\gauss{D}{1} \right. \notag \\
	& \qquad\qquad \left.+\,q^{\mu-\rho+i}\gauss{D-\mu+\rho-i+2}{1}\gauss{i-\mu}{1} \right\} w_i \notag \\
	& +q^{2\nu-D+2i+1}\gauss{D-\nu-i}{1}\gauss{i-\nu-\rho+1}{1} w_{i+1} \notag \\
	& +q^{D-\nu+\mu-\rho}\gauss{D-\mu+\rho-i+2}{1}\gauss{i-\mu}{1}w_{i-1}, \label{Awi}
\end{align}
and
\begin{align*}
	A\overline{w}_i =& \left\{ q^{\nu+i+1}\gauss{D-\nu-i-1}{1}\gauss{i-\nu-\rho}{1} -\gauss{D}{1} \right. \\
	& \qquad\qquad \left. +\,q^{\mu-\rho+i}\gauss{D-\mu+\rho-i+2}{1}\gauss{i-\mu}{1} \right\} \overline{w}_i \\
	& +q^{2\nu-D+2i+2}\gauss{D-\nu-i-1}{1}\gauss{i-\nu-\rho}{1} \overline{w}_{i+1} \\
	& +q^{D-\nu+\mu-\rho}\gauss{D-\mu+\rho-i+2}{1}\gauss{i-\mu}{1} \overline{w}_{i-1}
\end{align*}
for all $i$, where we understand that $w_i=\overline{w}_i=0$ whenever they are undefined.
It follows that $W_1$ and $W_2$ are $T$-modules.

We now claim that $W_1$ and $W_2$ are thin irreducible $T$-modules.
(For $W_2$, the claim holds under the additional assumption that $\rho<D-2\nu-1$; otherwise we have $W_2=0$.)
Let $U$ be a non-zero $T$-submodule of  $W_1$.
Since $U$ is closed under the $E_i^*$, and since $E_i^*W_1=\operatorname{span}\{w_i\}$, it follows that $U$ is spanned by some of the $w_i$.
Suppose that $w_i\in U$.
Observe that the coefficients of $w_{i\pm 1}$ in $Aw_i$ are non-zero whenever $w_{i\pm 1}$ are defined.
In other words, $E_{i\pm 1}^*Aw_i$ are non-zero scalar multiples of $w_{i\pm 1}$, and hence $w_{i\pm 1}\in U$.
By repeating this argument, it follows that $U$ contains all the basis vectors of $W_1$, i.e., $U=W_1$.
Hence $W_1$ is an irreducible $T$-module, and it is clear that $W_1$ is thin.
The same proof works for $W_2$ as well.

The endpoint $\epsilon$ and the diameter $d$ of $W_1$ are given by
\begin{equation*}
	(\epsilon,d)=\begin{cases} (\nu+\rho,D-2\nu-\rho) & \text{if} \ \nu+\rho\geqslant \mu, \\ (\mu,D-2\mu+\rho+1) & \text{if} \ \nu+\rho<\mu. \end{cases}
\end{equation*}
Consider the matrix \eqref{standard form} with parameters
\begin{equation*}
	h=\frac{q^{2D+2-\nu-\mu}}{(1-q)^2}, \qquad s=q^{2\nu+2\mu-2D-3}, \qquad \lambda_0=\theta_{\nu+\mu},
\end{equation*}
and
\begin{equation*}
	r=\begin{cases} q^{\nu+\mu-D-2} & \text{if} \ \nu+\rho\geqslant \mu, \\ q^{\nu+\mu-D-1} & \text{if} \ \nu+\rho<\mu. \end{cases}
\end{equation*}
Let $\gamma_{i+1}$ denote the coefficient of $w_{i+1}$ in $Aw_i$; cf.~\eqref{Awi}.
We mentioned above that $\gamma_{i+1}\ne 0$ for $\epsilon\leqslant i<\epsilon+d$.
Define the new basis $v_i$ $(0\leqslant i\leqslant d)$ of $W_1$ by
\begin{equation*}
	v_i=\left(\prod_{\ell=0}^{i-1}\frac{\gamma_{\epsilon+\ell+1}}{\mathsf{c}_{\ell+1}}\right) w_{\epsilon+i} \qquad (0\leqslant i\leqslant d).
\end{equation*}
Then we can routinely verify that $\mathsf{A}(q,d;h,r,s,\lambda_0)$ with the above parameters gives the matrix representing $A|_{W_1}$ with respect to the $v_i$, and that (cf.~\eqref{eigenvalues})
\begin{equation*}
	\lambda_i=\theta_{\nu+\mu+i} \qquad (0\leqslant i\leqslant d).
\end{equation*}
It follows that the dual endpoint $\epsilon^*$ of $W_1$ is given by
\begin{equation*}
	\epsilon^*=\nu+\mu.
\end{equation*}

\begin{theorem}\label{irreducible T-modules of type 1}
Let $V_1$ be the sum of all the $W_1$ obtained as above, where the $\tilde{\mathcal{W}}$ are over the irreducible $\tilde{\mathcal{H}}$-modules in $\mathbb{C}\tilde{P}$ with $\tau=0$.
Then we have
\begin{equation*}
	V_1=V_{1,0}\bigoplus V_{1,1},
\end{equation*}
where $V_{1,0}$ and $V_{1,1}$ are $T$-submodules of $V_1$ such that the following hold:
\begin{enumerate}
\item For the irreducible $T$-submodules $W$ in $V_{1,0}$, the endpoint $\epsilon$, dual endpoint $\epsilon^*$, and the diameter $d$ range over the set (cf.~\eqref{Omega})
\begin{equation*}
	\Omega_{1,0}=\{(\epsilon,\epsilon^*,d)\in\Omega: \epsilon^*\geqslant\epsilon, \ d>0\}.
\end{equation*}
Every $W$ is thin and has a basis $v_i$ $(0\leqslant i\leqslant d)$ such that $v_i\in E_{\epsilon+i}^*W$ for all $i$, and that the matrix representing $A|_W$ with respect to it agrees with the matrix \eqref{standard form} with parameters
\begin{equation*}
	h=\frac{q^{2D+2-\epsilon^*}}{(1-q)^2}, \qquad r=q^{\epsilon^*-D-2}, \qquad s=q^{2\epsilon^*-2D-3}, \qquad \lambda_0=\theta_{\epsilon^*}.
\end{equation*}
The isomorphism classes in $V_{1,0}$ are determined by $\epsilon,\epsilon^*$, and $d$, and the corresponding multiplicity $m_{\epsilon,\epsilon^*,d}^{1,0}$ in $V_{1,0}$ is given by
\begin{align*}
	\frac{m_{\epsilon,\epsilon^*,d}^{1,0}}{ (q)_{D-1} (q)_{D+1} }&=\frac{ (-1)^{D-d} (1-q^{2D-2\epsilon^*-d+2}) (1-q^d) q^{D-2\epsilon+\epsilon^*-d+\binom{2\epsilon-D+d}{2}} }{ (q)_{D-\epsilon} (q)_{2D-\epsilon-\epsilon^*-d+2} (q)_{D-\epsilon-d} (q)_{2\epsilon-D+d} (q)_{\epsilon^*-\epsilon} }.
\end{align*}
\item Similar statements to (i) above hold for $V_{1,1}$ with $r=q^{\epsilon^*-D-1}$, where we replace $\Omega_{1,0}$ and $m_{\epsilon,\epsilon^*,d}^{1,0}$ by $\Omega_{1,1}$ and $m_{\epsilon,\epsilon^*,d}^{1,1}$, respectively, where
\begin{gather*}
	\Omega_{1,1}=\{(\epsilon,\epsilon^*,d)\in\Omega: \epsilon^*\geqslant\epsilon, \ 2\epsilon+d>D\}, \\
	\frac{m_{\epsilon,\epsilon^*,d}^{1,1}}{ (q)_{D-1} (q)_{D+1} } = \frac{ (-1)^{D-d+1} (1-q^{2D-2\epsilon^*-d+1}) (1-q^{d+1}) q^{D-2\epsilon+\epsilon^*-d+1+\binom{2\epsilon-D+d-1}{2}} }{ (q)_{D-\epsilon+2} (q)_{2D-\epsilon-\epsilon^*-d+1} (q)_{D-\epsilon-d+1} (q)_{2\epsilon-D+d-1} (q)_{\epsilon^*-\epsilon} }.
\end{gather*}
\end{enumerate}
\end{theorem}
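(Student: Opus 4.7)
The plan is to organize the construction already carried out in the paragraphs preceding the theorem. For every irreducible $\tilde{\mathcal{H}}$-module $\tilde{\mathcal{W}}$ in $\mathbb{C}\tilde{P}$ with lower endpoint $(\nu,\mu,0)$ and index $\rho$, the discussion above yields a thin irreducible $T$-submodule $W_1\subset\tilde{\mathcal{E}}^*\tilde{\mathcal{W}}$, computes its endpoint and diameter in the two sub-cases $\nu+\rho\geqslant\mu$ and $\nu+\rho<\mu$, identifies $\epsilon^*=\nu+\mu$, and exhibits a basis on which $A|_{W_1}$ takes the standard form $\mathsf{A}(q,d;h,r,s,\lambda_0)$ with parameters $h=q^{2D+2-\nu-\mu}/(1-q)^2$, $s=q^{2\nu+2\mu-2D-3}$, $\lambda_0=\theta_{\nu+\mu}$, and $r=q^{\nu+\mu-D-2}$ or $r=q^{\nu+\mu-D-1}$ depending on the sub-case. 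Since the $\tilde{\mathcal{W}}$ form an internal direct-sum decomposition of $\mathbb{C}\tilde{P}$, the corresponding $W_1$ are automatically linearly independent, and so $V_1$ is the direct sum of these $W_1$. I would then define $V_{1,0}$ (resp.~$V_{1,1}$) to be the sum of the $W_1$ coming from the $\tilde{\mathcal{W}}$ with $\nu+\rho\geqslant\mu$ (resp.~$\nu+\rho<\mu$); these are evidently $T$-submodules yielding the asserted decomposition $V_1=V_{1,0}\bigoplus V_{1,1}$.

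Next, I would translate the parameters. A routine check against \eqref{range1}--\eqref{range2} together with the case condition shows that $(\nu,\mu,\rho)\mapsto(\epsilon,\epsilon^*,d)$ is a bijection from the admissible $(\nu,\mu,\rho)$ in each sub-case onto $\Omega_{1,0}$ and $\Omega_{1,1}$ respectively, with inverses $(\nu,\mu,\rho)=(D-\epsilon-d,\,\epsilon+\epsilon^*+d-D,\,2\epsilon+d-D)$ and $(\nu,\mu,\rho)=(\epsilon^*-\epsilon,\,\epsilon,\,2\epsilon+d-D-1)$. In particular, $\nu+\mu=\epsilon^*$ in both cases, so the stated $h,s,\lambda_0$, and $r$ agree with those produced by the construction. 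Lemma \ref{uniqueness of r} then guarantees that within each $V_{1,j}$ the isomorphism class of $W_1$ is determined by $(\epsilon,\epsilon^*,d)$, and that no constituent of $V_{1,0}$ is $T$-isomorphic to one of $V_{1,1}$; the caveat $d>0$ in Lemma \ref{uniqueness of r} is harmless because the first sub-case always forces $d\geqslant 1$ (from $\rho\leqslant D-2\nu-1$), so whenever a potential collision could occur, the lemma applies.

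Finally, because the bijections above identify isomorphism classes of $\tilde{\mathcal{H}}$-modules contributing to $V_{1,j}$ with isomorphism classes of irreducible $T$-modules in $V_{1,j}$, the multiplicity $m_{\epsilon,\epsilon^*,d}^{1,j}$ coincides with the $\tilde{\mathcal{H}}$-multiplicity $m_{\nu,\mu,0,\rho}$ from Theorem \ref{irreducible tilde-H-modules}(i). Substituting the inversion formulas (with $a=D-1$, $b=D+1$) into that expression and simplifying will produce the stated closed forms for $m_{\epsilon,\epsilon^*,d}^{1,0}$ and $m_{\epsilon,\epsilon^*,d}^{1,1}$. The principal obstacle I anticipate is the parameter bookkeeping: keeping the two sub-cases separate, confirming that the inequalities carve out $\Omega_{1,0}$ and $\Omega_{1,1}$ precisely, and carrying out the substitutions so that all the $q$-shifted factorials and exponents line up with the published formula. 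Once that is in place, the matrix form and the multiplicity computation are immediate consequences of results already at hand.
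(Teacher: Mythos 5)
Your proposal is correct and follows essentially the same route as the paper: the published proof simply defines $V_{1,0}$ and $V_{1,1}$ as the sums of the $W_1$ with $\nu+\rho\geqslant\mu$ and $\nu+\rho<\mu$ respectively, and declares the rest a routine computation from \eqref{range1}, \eqref{range2}, and Theorem \ref{irreducible tilde-H-modules}\,(i). Your explicit inversion formulas $(\nu,\mu,\rho)=(D-\epsilon-d,\,\epsilon+\epsilon^*+d-D,\,2\epsilon+d-D)$ and $(\epsilon^*-\epsilon,\,\epsilon,\,2\epsilon+d-D-1)$ are exactly the right bookkeeping and do reproduce the stated ranges and multiplicity formulas.
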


\begin{proof}
We let $V_{1,0}$ (resp.~$V_{1,1}$) be the sum of the $W_1$ for which the corresponding $\tilde{\mathcal{W}}$ satisfy $\nu+\rho\geqslant \mu$ (resp.~$\nu+\rho<\mu$).
All the computations are routinely done using \eqref{range1}, \eqref{range2}, and Theorem \ref{irreducible tilde-H-modules}\,(i).
\end{proof}

Concerning $W_2$, the endpoint $\epsilon$ and the diameter $d$ are given by
\begin{equation*}
	(\epsilon,d)=\begin{cases} (\nu+\rho+1,D-2\nu-\rho-2) & \text{if} \ \nu+\rho+1\geqslant \mu, \\ (\mu,D-2\mu+\rho+1) & \text{if} \ \nu+\rho+1<\mu. \end{cases}
\end{equation*}
In this case, we consider the matrix \eqref{standard form} with parameters
\begin{equation*}
	h=\frac{q^{2D+1-\nu-\mu}}{(1-q)^2}, \qquad s=q^{2\nu+2\mu-2D-1}, \qquad \lambda_0=\theta_{\nu+\mu+1},
\end{equation*}
and
\begin{equation*}
	r=\begin{cases} q^{\nu+\mu-D-1} & \text{if} \ \nu+\rho+1\geqslant \mu, \\ q^{\nu+\mu-D} & \text{if} \ \nu+\rho+1<\mu. \end{cases}
\end{equation*}
A similar argument shows that the dual endpoint $\epsilon^*$ of $W_2$ is given by
\begin{equation*}
	\epsilon^*=\nu+\mu+1.
\end{equation*}

\begin{theorem}\label{irreducible T-modules of type 2}
Let $V_2$ be the sum of all the $W_2$ obtained as above, where the $\tilde{\mathcal{W}}$ are over the irreducible $\tilde{\mathcal{H}}$-modules in $\mathbb{C}\tilde{P}$ with $\tau=0$.
Then we have
\begin{equation*}
	V_2=V_{2,0}\bigoplus V_{2,1},
\end{equation*}
where $V_{2,0}$ and $V_{2,1}$ are $T$-submodules of $V_2$ such that the following hold:
\begin{enumerate}
\item Similar statements to Theorem \ref{irreducible T-modules of type 1}\,(i) hold for $V_{2,0}$ with $r=q^{\epsilon^*-D-2}$, where we replace $\Omega_{1,0}$ and $m_{\epsilon,\epsilon^*,d}^{1,0}$ by $\Omega_{2,0}$ and $m_{\epsilon,\epsilon^*,d}^{2,0}$, respectively, where
\begin{gather*}
	\Omega_{2,0}=\{(\epsilon,\epsilon^*,d)\in\Omega: \epsilon^*\geqslant\epsilon, \ \epsilon+d<D\}, \\
	\frac{m_{\epsilon,\epsilon^*,d}^{2,0}}{ (q)_{D-1} (q)_{D+1} } = \frac{ (-1)^{D-d} (1-q^{2D-2\epsilon^*-d+2}) (1-q^{d+2}) q^{D-2\epsilon+\epsilon^*-d-1+\binom{2\epsilon-D+d}{2}} }{ (q)_{D-\epsilon+1} (q)_{2D-\epsilon-\epsilon^*-d+2} (q)_{D-\epsilon-d-1} (q)_{2\epsilon-D+d} (q)_{\epsilon^*-\epsilon} }.
\end{gather*}
\item Similar statements to Theorem \ref{irreducible T-modules of type 1}\,(i) hold for $V_{2,1}$ with $r=q^{\epsilon^*-D-1}$, where we replace $\Omega_{1,0}$ and $m_{\epsilon,\epsilon^*,d}^{1,0}$ by $\Omega_{2,1}$ and $m_{\epsilon,\epsilon^*,d}^{2,1}$, respectively, where
\begin{gather*}
	\Omega_{2,1}=\{(\epsilon,\epsilon^*,d)\in\Omega: \epsilon^*>\epsilon, \ 2\epsilon+d>D\}, \\
	\frac{m_{\epsilon,\epsilon^*,d}^{2,1}}{ (q)_{D-1} (q)_{D+1} } = \frac{ (-1)^{D-d+1} (1-q^{2D-2\epsilon^*-d+3}) (1-q^{d+1}) q^{D-2\epsilon+\epsilon^*-d+\binom{2\epsilon-D+d-1}{2}} }{ (q)_{D-\epsilon+2} (q)_{2D-\epsilon-\epsilon^*-d+2} (q)_{D-\epsilon-d+1} (q)_{2\epsilon-D+d-1} (q)_{\epsilon^*-\epsilon-1} }.
\end{gather*}
\end{enumerate}
\end{theorem}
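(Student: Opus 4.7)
The plan is to mirror the proof of Theorem \ref{irreducible T-modules of type 1}, splitting $V_2$ according to the dichotomy $\nu+\rho+1\geqslant\mu$ versus $\nu+\rho+1<\mu$ that governs the piecewise formulas for the endpoint and diameter of $W_2$ recalled immediately before the theorem. Accordingly, I would define $V_{2,0}$ as the sum of those $W_2$ whose parent $\tilde{\mathcal{W}}$ satisfies $\nu+\rho+1\geqslant\mu$, and $V_{2,1}$ as the sum of those whose parent satisfies $\nu+\rho+1<\mu$. The irreducibility and thinness of each $W_2$ has already been established in the text, the parameters $(\epsilon,\epsilon^*,d)$ computed, and the matrix representing $A|_{W_2}$ brought into the standard form \eqref{standard form} with the values of $h$, $r$, $s$, $\lambda_0$ recorded above the theorem.

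The heart of the proof is a change of variables from $(\nu,\mu,\rho)$ to $(\epsilon,\epsilon^*,d)$ in each of the two cases. For $V_{2,0}$, the relations $(\epsilon,\epsilon^*,d)=(\nu+\rho+1,\nu+\mu+1,D-2\nu-\rho-2)$ invert to
\begin{equation*}
	\nu=D-\epsilon-d-1,\qquad \mu=\epsilon^*+\epsilon+d-D,\qquad \rho=2\epsilon+d-D.
\end{equation*}
Substituting into the admissibility conditions \eqref{range1}, \eqref{range2}, the dichotomy $\nu+\rho+1\geqslant\mu$, and the non-emptiness condition $W_2\ne 0$, one verifies that the combined constraints on $(\nu,\mu,\rho)$ collapse precisely to membership in $\Omega_{2,0}$; in particular, the bound $\nu\geqslant 0$ is what produces the extra inequality $\epsilon+d<D$. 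For $V_{2,1}$, the relations $(\epsilon,\epsilon^*,d)=(\mu,\nu+\mu+1,D-2\mu+\rho+1)$ invert to
\begin{equation*}
	\mu=\epsilon,\qquad \nu=\epsilon^*-\epsilon-1,\qquad \rho=2\epsilon+d-D-1,
\end{equation*}
and an analogous bookkeeping matches the image with $\Omega_{2,1}$: here $\nu\geqslant 0$ gives $\epsilon^*>\epsilon$ and $\rho\geqslant 0$ gives $2\epsilon+d>D$.

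With these dictionaries in hand, the values of $h$, $s$, and $\lambda_0$ reduce to those asserted, while the case-dependent $r$ specializes to $q^{\epsilon^*-D-2}$ on $V_{2,0}$ and to $q^{\epsilon^*-D-1}$ on $V_{2,1}$. By Lemma \ref{uniqueness of r}, the isomorphism class of an irreducible $T$-submodule of $V_2$ is determined by $(\epsilon,\epsilon^*,d)$ together with the side of the dichotomy from which it arose; in particular, two irreducibles coming from opposite sides have distinct $r$-values and are therefore non-isomorphic, which is what confirms that $V_{2,0}\bigoplus V_{2,1}$ is a genuine direct sum of $T$-modules. The multiplicities $m^{2,0}_{\epsilon,\epsilon^*,d}$ and $m^{2,1}_{\epsilon,\epsilon^*,d}$ are then obtained by substituting the two inverse dictionaries into $m_{\nu,\mu,0,\rho}$ from Theorem \ref{irreducible tilde-H-modules}\,(i) and simplifying the resulting $q$-Pochhammer symbols and $q$-exponents.

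The main obstacle is index bookkeeping. I would need to verify carefully that in each of the two cases the constraints \eqref{range1}, \eqref{range2}, the dichotomy, and the non-emptiness of $W_2$ translate precisely to $\Omega_{2,0}$ or $\Omega_{2,1}$, with neither missing triples nor spurious ones, and that the change-of-variables maps are bijective onto their images (so no parameter triple is double-counted). The $+1$ shifts appearing in $W_2$ relative to $W_1$ are what introduce the boundary conditions $\epsilon+d<D$ in $\Omega_{2,0}$ and the strict inequality $\epsilon^*>\epsilon$ in $\Omega_{2,1}$, and it is essential that these shifts be tracked consistently through the binomial and Pochhammer factors in the multiplicity formulas.
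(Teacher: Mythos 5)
Your proposal is correct and follows essentially the same route as the paper: the paper's proof consists precisely of defining $V_{2,0}$ (resp.\ $V_{2,1}$) as the sum of the $W_2$ with $\nu+\rho+1\geqslant\mu$ (resp.\ $\nu+\rho+1<\mu$), recalling the non-vanishing condition $\rho<D-2\nu-1$, and leaving the change of variables and substitution into $m_{\nu,\mu,0,\rho}$ as routine computations, all of which you carry out correctly (e.g.\ $\nu=D-\epsilon-d-1\geqslant 0$ indeed yields $\epsilon+d<D$, and $\nu=\epsilon^*-\epsilon-1\geqslant 0$, $\rho=2\epsilon+d-D-1\geqslant 0$ yield the defining inequalities of $\Omega_{2,1}$).
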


\begin{proof}
We let $V_{2,0}$ (resp.~$V_{2,1}$) be the sum of the $W_2$ for which the corresponding $\tilde{\mathcal{W}}$ satisfy $\nu+\rho+1\geqslant \mu$ (resp.~$\nu+\rho+1<\mu$).
Recall that we have the additional assumption in this case that $\rho<D-2\nu-1$, so that $W_2\ne 0$.
\end{proof}

Next, let $\tilde{\mathcal{W}}$ be an irreducible $\tilde{\mathcal{H}}$-module in $\mathbb{C}\tilde{P}$ with lower endpoint $(\nu,\mu,1)$ and index $\rho$, where we recall from Theorem \ref{irreducible tilde-H-modules} that $\nu,\mu$, and $\rho$ satisfy
\begin{gather*}
	0\leqslant \nu\leqslant D-1, \qquad 0\leqslant \mu\leqslant D+1, \\
	\max\{0,2\mu-D\}\leqslant \rho\leqslant \min\{D-2\nu-1,\mu+1\}.
\end{gather*}
Let the basis vectors $w_{i,j,1}$ of $\tilde{\mathcal{W}}$ be as in Theorem \ref{irreducible tilde-H-modules}\,(ii).
Let
\begin{equation*}
	W_3=\tilde{\mathcal{E}}^*\tilde{\mathcal{W}}=\operatorname{span}\bigl\{ w_{D-i,i,1}: \max\{\nu+\rho+1,\mu\}\leqslant i\leqslant \min\{D-\nu,D-\mu+\rho\} \bigr\}.
\end{equation*}
Then it follows from Theorem \ref{irreducible tilde-H-modules}\,(ii) and \eqref{A11} that
\begin{align*}
	Aw_{D-i,i,1} =& \left\{ q^{\nu+i+1}\gauss{D-\nu-i}{1}\gauss{i-\nu-\rho}{1} -\gauss{D+1}{1} \right. \\
	& \qquad\qquad \left. +\,q^{\mu-\rho+i}\gauss{D-\mu+\rho-i+1}{1}\gauss{i-\mu}{1} \right\} w_{D-i,i,1} \\
	& +q^{2\nu-D+2i+2}\gauss{D-\nu-i}{1}\gauss{i-\nu-\rho}{1} w_{D-i-1,i+1,1} \\
	& +q^{D-\nu+\mu-\rho}\gauss{D-\mu+\rho-i+1}{1}\gauss{i-\mu}{1}w_{D-i+1,i-1,1}
\end{align*}
for all $i$, where we understand that $w_{D-i,i,1}=0$ whenever it is undefined.
We can similarly show that $W_3$ is a thin irreducible $T$-module.
The endpoint $\epsilon$ and the diameter $d$ of $W_3$ are given by
\begin{equation*}
	(\epsilon,d)=\begin{cases} (\nu+\rho+1,D-2\nu-\rho-1) & \text{if} \ \nu+\rho\geqslant \mu, \\ (\mu,D-2\mu+\rho) & \text{if} \ \nu+\rho<\mu. \end{cases}
\end{equation*}
Consider the matrix \eqref{standard form} with parameters
\begin{equation*}
	h=\frac{q^{2D+1-\nu-\mu}}{(1-q)^2}, \qquad s=q^{2\nu+2\mu-2D-1}, \qquad \lambda_0=\theta_{\nu+\mu+1},
\end{equation*}
and
\begin{equation*}
	r=\begin{cases} q^{\nu+\mu-D} & \text{if} \ \nu+\rho\geqslant \mu, \\ q^{\nu+\mu-D-1} & \text{if} \ \nu+\rho<\mu. \end{cases}
\end{equation*}
Then we find that the dual endpoint $\epsilon^*$ of $W_3$ is given by
\begin{equation*}
	\epsilon^*=\nu+\mu+1.
\end{equation*}

\begin{theorem}\label{irreducible T-modules of type 3}
Let $V_3$ be the sum of all the $W_3$ obtained as above, where the $\tilde{\mathcal{W}}$ are over the irreducible $\tilde{\mathcal{H}}$-modules in $\mathbb{C}\tilde{P}$ with $\tau=1$.
Then we have
\begin{equation*}
	V_3=V_{3,0}\bigoplus V_{3,1},
\end{equation*}
where $V_{3,0}$ and $V_{3,1}$ are $T$-submodules of $V_3$ such that the following hold:
\begin{enumerate}
\item Similar statements to Theorem \ref{irreducible T-modules of type 1}\,(i) hold for $V_{3,0}$ with $r=q^{\epsilon^*-D-2}$, where we replace $\Omega_{1,0}$ and $m_{\epsilon,\epsilon^*,d}^{1,0}$ by $\Omega_{3,0}$ and $m_{\epsilon,\epsilon^*,d}^{3,0}$, respectively, where
\begin{gather*}
	\Omega_{3,0}=\{(\epsilon,\epsilon^*,d)\in\Omega: \epsilon^*>\epsilon\}, \\
	\begin{split}
	\frac{m_{\epsilon,\epsilon^*,d}^{3,0}}{ (q)_{D-1} (q)_{D+1} } &= \frac{ (-1)^{D-d+1} (1-q^{2D-2\epsilon^*-d+2}) (1-q^{d+1}) q^{D-2\epsilon+\epsilon^*-d-1+\binom{2\epsilon-D+d}{2}} }{ (q)_{D-\epsilon+2} (q)_{2D-\epsilon-\epsilon^*-d+1} (q)_{D-\epsilon-d+1} (q)_{2\epsilon-D+d} (q)_{\epsilon^*-\epsilon-1} } \\
	& \qquad \times (q^{D+3}-q^{D-\epsilon+2}-q^{D-\epsilon-d+1}+1).
	\end{split}
\end{gather*}
\item Similar statements to Theorem \ref{irreducible T-modules of type 1}\,(i) hold for $V_{3,1}$ with $r=q^{\epsilon^*-D-1}$, where we replace $\Omega_{1,0}$ and $m_{\epsilon,\epsilon^*,d}^{1,0}$ by $\Omega_{3,1}$ and $m_{\epsilon,\epsilon^*,d}^{3,1}$, respectively, where
\begin{gather*}
	\Omega_{3,1}=\{(\epsilon,\epsilon^*,d)\in\Omega: \epsilon^*=\epsilon-1\}\sqcup \{(\epsilon,\epsilon^*,d)\in\Omega: \epsilon^*\geqslant\epsilon, \ 2\epsilon+d>D\}, \\
	\begin{split}
	\frac{m_{\epsilon,\epsilon^*,d}^{3,1}}{ (q)_{D-1} (q)_{D+1} } &= \frac{ (-1)^{D-d} (1-q^{2D-2\epsilon^*-d+2}) (1-q^{d+1}) q^{D-2\epsilon+\epsilon^*-d+\binom{2\epsilon-D+d-1}{2}} }{ (q)_{D-\epsilon+1} (q)_{2D-\epsilon-\epsilon^*-d+3} (q)_{D-\epsilon-d} (q)_{2\epsilon-D+d-1} (q)_{\epsilon^*-\epsilon+1} } \\
	& \qquad \times (q^{D+3}-q^{2D-\epsilon-\epsilon^*-d+3}-q^{\epsilon^*-\epsilon+1}+1).
	\end{split}
\end{gather*}
\end{enumerate}
\end{theorem}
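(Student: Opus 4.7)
The plan is to follow the template of the proofs of Theorems \ref{irreducible T-modules of type 1} and \ref{irreducible T-modules of type 2}. The essential ingredients are already in place: the discussion preceding the statement shows that every irreducible $\tilde{\mathcal{H}}$-module $\tilde{\mathcal{W}}$ with $\tau=1$ yields a thin irreducible $T$-module $W_3=\tilde{\mathcal{E}}^{*}\tilde{\mathcal{W}}$, whose standard-form parameters $h,s,\lambda_0$ depend only on $(\epsilon^{*},d)$ and whose fourth parameter $r$ equals $q^{\nu+\mu-D}=q^{\epsilon^{*}-D-1}$ when $\nu+\rho\geqslant\mu$ and $q^{\nu+\mu-D-1}=q^{\epsilon^{*}-D-2}$ when $\nu+\rho<\mu$. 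Lemma \ref{uniqueness of r} then says that modules from the two cases lie in distinct $T$-isomorphism classes. I would therefore declare
\[
V_{3,0}\,=\sum_{\tilde{\mathcal{W}}:\,\nu+\rho<\mu}W_3, \qquad V_{3,1}\,=\sum_{\tilde{\mathcal{W}}:\,\nu+\rho\geqslant\mu}W_3,
\]
so that $V_3=V_{3,0}\oplus V_{3,1}$ matches the two values of $r$ appearing in the theorem.

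The next step is to translate the admissible $(\nu,\mu,\rho)$-range for each case into a set of triples $(\epsilon,\epsilon^{*},d)$. For $V_{3,0}$ the inversion is $\mu=\epsilon$, $\nu=\epsilon^{*}-\epsilon-1$, $\rho=2\epsilon+d-D$; one checks that the $\tau=1$ conditions of Theorem \ref{irreducible tilde-H-modules} together with $\nu+\rho<\mu$ reduce under this inversion to the conditions defining $\Omega_{3,0}$ (crucially, $\nu\geqslant 0$ becomes $\epsilon^{*}>\epsilon$). For $V_{3,1}$ the inversion is $\nu=D-\epsilon-d$, $\mu=\epsilon+\epsilon^{*}+d-D-1$, $\rho=2\epsilon+d-D-1$; the same bookkeeping, now with $\nu+\rho\geqslant\mu$, produces the $\Omega$-conditions together with $\epsilon^{*}\geqslant\epsilon-1$ and $2\epsilon+d>D$, which rearranges as the disjoint union $\{\epsilon^{*}=\epsilon-1\}\sqcup\{\epsilon^{*}\geqslant\epsilon,\,2\epsilon+d>D\}$ in the theorem.

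Finally, since each $\tilde{\mathcal{W}}$ contributes a unique $W_3$, the multiplicity $m_{\epsilon,\epsilon^{*},d}^{3,k}$ equals $m_{\nu,\mu,1,\rho}$ from Theorem \ref{irreducible tilde-H-modules}\,(ii) evaluated at the translated parameters, with $a=D-1$ and $b=D+1$. Substituting and simplifying the $q$-Pochhammer symbols together with the trailing factor $q^{b+2}-q^{b-\mu+1}-q^{\mu-\rho+1}+1$ delivers the stated multiplicity formulas. I expect the main obstacle to be the careful treatment of the isolated branch $\epsilon^{*}=\epsilon-1$ inside $\Omega_{3,1}$: this branch has no analogue in Theorems \ref{irreducible T-modules of type 1} or \ref{irreducible T-modules of type 2} and arises precisely because Theorem \ref{irreducible tilde-H-modules} permits $\rho$ to reach its upper bound $\mu+1$ only in the $\tau=1$ setting. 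One must verify that this boundary case is correctly captured by the inversion, and in particular that $\epsilon^{*}=\epsilon-1$ automatically forces $2\epsilon+d\geqslant D+2$ through $2\epsilon^{*}+d\geqslant D$; everything else is a routine $q$-identity computation.
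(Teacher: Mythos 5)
Your proposal is correct and follows essentially the same route as the paper: the authors likewise define $V_{3,0}$ (resp.\ $V_{3,1}$) as the sum of the $W_3$ coming from modules $\tilde{\mathcal{W}}$ with $\nu+\rho<\mu$ (resp.\ $\nu+\rho\geqslant\mu$), and leave the parameter inversion and multiplicity substitution as routine computations via Theorem \ref{irreducible tilde-H-modules}. Your explicit inversion formulas and the observation that $\epsilon^*=\epsilon-1$ together with $2\epsilon^*+d\geqslant D$ forces $2\epsilon+d\geqslant D+2$ correctly fill in the bookkeeping the paper omits.
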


\begin{proof}
In this case, we let $V_{3,0}$ (resp.~$V_{3,1}$) be the sum of the $W_3$ for which the corresponding $\tilde{\mathcal{W}}$ satisfy $\nu+\rho<\mu$ (resp.~$\nu+\rho\geqslant\mu$).
\end{proof}

We have
\begin{equation*}
	\mathbb{C}X=V_{1,0}\bigoplus V_{1,1}\bigoplus V_{2,0}\bigoplus V_{2,1}\bigoplus V_{3,0}\bigoplus V_{3,1},
\end{equation*}
and hence Theorems \ref{irreducible T-modules of type 1}, \ref{irreducible T-modules of type 2}, and \ref{irreducible T-modules of type 3} give all the irreducible $\tilde{\mathcal{H}}$-modules up to isomorphism.
In view of Lemma \ref{uniqueness of r}, it follows that
\begin{itemize}
\item Irreducible $T$-modules in $V_{1,0}\bigoplus V_{2,0}\bigoplus V_{3,0}$ are isomorphic if and only if they have the same $\epsilon,\epsilon^*$, and $d$.
\item Irreducible $T$-modules in $V_{1,1}\bigoplus V_{2,1}\bigoplus V_{3,1}$ are isomorphic if and only if they have the same $\epsilon,\epsilon^*$, and $d$.
\item Irreducible $T$-modules with $d=0$ are isomorphic if and only if they have the same $\epsilon$ and $\epsilon^*$.
\item There are no other isomorphisms.
\end{itemize} 
By these comments, we may compute the multiplicity of an irreducible $T$-module in $\mathbb{C}X$ simply by summing up the corresponding multiplicities in the above summands, but we omit the formulas as these seem too complicated. 

\begin{remark}
It is known that every irreducible $T$-module of the Grassmann graph $J_q(n,D)$ satisfies $\epsilon^*\geqslant \epsilon$; cf.~\cite{LIW2020LAA,TTW2021+pre,Terwilliger1993JACb,Watanabe2015M}.
On the other hand, $\tilde{J}_q(2D+1,D)$ has irreducible $T$-modules with $\epsilon^*=\epsilon-1$.
\end{remark}

\begin{remark}
Using the above results, we can compute the spectrum of the \emph{local graph} of $\tilde{J}_q(2D+1,D)$ with respect to $x$, i.e., the induced subgraph on the neighbors of $x$.
First, there is a unique irreducible $T$-module with $\epsilon=0$, namely, $Tx$.
This module is called the \emph{primary} $T$-module.
It satisfies $\epsilon^*=0$ and $d=D$ (cf.~\cite[Lemma 3.6]{Terwilliger1992JAC}), and resides in $V_{1,0}$.
From the primary $T$-module, we obtain the eigenvalue $\mathsf{a}_1=q(1+q)\gauss{D}{1}-1$ (cf.~\eqref{standard form}) with multiplicity one.
Next, there are four types of irreducible $T$-modules with $\epsilon=1$, namely, those with $(\epsilon^*,d)=(1,D-2)$ in $V_{1,0}\bigoplus V_{2,0}$, those with $(\epsilon^*,d)=(1,D-1)$ in $V_{1,0}$, those with $(\epsilon^*,d)=(2,D-2)$ in $V_{1,0}\bigoplus V_{2,0}\bigoplus V_{3,0}$, and those with $(\epsilon^*,d)=(1,D-1)$ in $V_{1,1}\bigoplus V_{3,1}$.
The corresponding eigenvalues $\mathsf{a}_0$ are $q^2\gauss{D}{1}-1$, $-1$, $-q-1$, and $q^2\gauss{D-1}{1}-1$, respectively.
By computing their multiplicities, we obtain the spectrum as follows:
\begin{equation*}
\left[%
\begin{array}{cccccc}
	q(1+q)\gauss{D}{1}-1 & q^2\gauss{D}{1}-1 & -1 & -q-1 & q^2\gauss{D-1}{1}-1 \\[2mm]
	1 & \gauss{D-1}{1} & (q^{D+1}-1)\gauss{D-1}{1} & q^2\gauss{D-1}{1}\left(\gauss{D+1}{1}-q^{D-1}\right) & q\gauss{D+1}{1}-1
\end{array}%
\right]
\end{equation*}
This spectrum was first found by Bang, Fujisaki, and Koolen \cite[Theorem 1.1\,(ii)]{BFK2009EJC}.
\end{remark}

%%%%%%%%%%%%%%%%%%%%%%%%%%%%%%%%%
\section*{Acknowledgments}

TW gratefully acknowledges financial support from Professor Tatsuro Ito so that TW can concentrate on this research.
HT was supported by JSPS KAKENHI Grant Numbers JP17K05156 and JP20K03551.

%%%%%%%%%%%%%%%%%%%%
%%%%%%%%%%%%%%%%%%%%

\end{document}